\documentclass[journal]{IEEEtran}
\usepackage[utf8]{inputenc}
\usepackage{amsmath, amssymb, amsfonts}
\usepackage{graphicx}
\usepackage{indentfirst}
\usepackage{float} 
\usepackage{subfigure} 
\usepackage{algorithm}
\usepackage{algorithmic}
\usepackage{booktabs}
\usepackage{tabularx}
\usepackage{hyperref}

\newtheorem{theorem}{\bf Theorem}
\newtheorem{remark}{Remark}
\newtheorem{assumption}{Assumption}

\title{Physics-Informed Stochastic Configuration Machine: A Backpropagation-Free Neural Network with Fast Training for Nonlinear Differential Equations}
\author{
	Yuehao~Song, Zhong~Chen, Lihui~Cen, Liang~Wu, and~Kai~Zhang%
	\thanks{Yuehao Song, Zhong Chen, and Lihui Cen are with the School of Automation, Central South University, 932 South Lushan Road, Changsha 410083, China (e-mails: \mbox{yh.song@csu.edu.cn}; \mbox{zhongchen@csu.edu.cn}; \mbox{lhcen@csu.edu.cn}).}%
	\thanks{Liang Wu is with Johns Hopkins University, Baltimore, MD 21218, USA (e-mail: \mbox{wliang14@jh.edu}).}%
	\thanks{Kai Zhang is with the State Key Laboratory of Simulation and Regulation of Water Cycle in River Basin, China Institute of Water Resources and Hydropower Research, Beijing 100038, China (e-mail: \mbox{zhangkai@iwhr.com}).}%
	\thanks{Corresponding author: Lihui Cen (e-mail: \mbox{lhcen@csu.edu.cn}).}
} 
\date{}

\begin{document}
	\maketitle
	
	\begin{abstract}
		While Physics-Informed Neural Networks (PINNs) have emerged as a transformative paradigm for solving complex differential equations, their reliance on backpropagation-based gradient descent and automatic differentiation (AD) imposes significant computational bottlenecks and severe non-convex optimization challenges. To overcome these fundamental limitations, we propose the \textbf{Physics-Informed Stochastic Configuration Machine (PI-SCM)}, a novel backpropagation-free framework for both forward and inverse problems in differential equations. The core mathematical contribution lies in the analytical evaluation of local Jacobians for nonlinear differential operators, which facilitates a linearized representation of the physical loss and projects it into a unified, linearized algebraic subspace. This reformulation allows for the explicit determination of optimal network weights via a sequence of generalized linear least squares solvers, effectively bypassing the iterative traps of traditional nonlinear optimizers. We develop a progressive algorithmic suite—comprising localized construction (PI-SC-I), sliding-window updating (PI-SC-II), and global updating (PI-SC-III)—and rigorously establish their universal approximation properties. Extensive experiments demonstrate that PI-SCM achieves high-fidelity predictive accuracy and robust parameter identification while accelerating the training process by orders of magnitude compared to standard PINNs. Our work provides a highly efficient and scalable foundation for next-generation, real-time Scientific Machine Learning applications.
	\end{abstract}

	\begin{IEEEkeywords}
		Nonlinear Differential Equations, Scientific Machine Learning, Physics-Informed Neural Networks, Stochastic Configuration Networks, Backpropagation-free.
	\end{IEEEkeywords}
	
	\section{Introduction}
	
	\IEEEPARstart{N}{onlinear} differential equations, encompassing both ordinary differential equations (ODEs) and partial differential equations (PDEs), serve as the fundamental mathematical language for describing complex dynamical systems across diverse scientific and engineering disciplines. Traditionally, solving these governing equations relies on classical numerical techniques, such as the finite element method (FEM) and high-order Runge-Kutta solvers. While mathematically rigorous, these traditional solvers often encounter significant computational bottlenecks when confronted with the ``curse of dimensionality" or complex geometric domains.
	
	To circumvent these limitations, Scientific Machine Learning (SciML) has emerged as a transformative computational paradigm. As a burgeoning field, SciML seeks to integrate machine learning-derived methodologies into traditional engineering frameworks, thereby leveraging domain knowledge and known physical constraints to inform and regularize the learning process \cite{ref_sciml}. At the forefront of this revolution are Physics-Informed Neural Networks (PINNs) \cite{ref1}. By embedding the physical governing equations directly into the loss function of deep neural networks, PINNs utilize established physical laws to drive the optimization process, enabling robust, mesh-free approximations. Consequently, PINNs have seen widespread and highly successful applications across varied engineering domains. For instance, Schiassi et al. \cite{ref2} employed PINNs to learn the state–costate dynamics satisfying optimality conditions, enabling efficient solutions to complex optimal control problems such as orbital transfers. Shukla et al. \cite{ref3} utilized PINNs to identify and characterize surface-breaking cracks in metal plates. Berkhahn et al. \cite{ref4} utilized PINNs to model COVID-19 infection and hospitalization scenarios. Diao et al. \cite{ref5} applied PINNs to tackle intricate boundary conditions in multi-material problems within solid mechanics.
	
	However, despite these remarkable successes, the standard PINN architecture is fundamentally constrained by severe computational bottlenecks. Relying heavily on backpropagation-based gradient descent algorithms—such as Adam and L-BFGS \cite{ref_pinnguide}—to minimize a composite loss function, this optimization paradigm encounters two critical mathematical challenges when applied to highly nonlinear dynamical systems.
	
	First, from an optimization perspective, the incorporation of nonlinear differential operators induces a highly nonconvex optimization landscape \cite{ref_pinnwhere}. This intricate topology frequently triggers severe gradient pathologies, where competing gradient magnitudes and directions from different loss components hinder effective weight updates. Consequently, iterative gradient-based optimizers are highly susceptible to becoming trapped in sub-optimal local minima, manifesting as residual stagnation and degraded predictive accuracy.
	
	Second, from a computational perspective, the inherent mechanics of backpropagation impose a severe efficiency bottleneck. Evaluating the physical loss necessitates computing derivatives with respect to the spatio-temporal coordinates via automatic differentiation (AD). Continuously traversing the deep computational graph backward to update weights entails a massive memory footprint and incurs an exorbitant computational cost per iteration \cite{ref_pinnwhere}. This fundamental limitation results in slow convergence speeds and prohibitive overall training times, rendering standard PINNs impractical for real-time applications or large-scale parametric studies.
	
	Therefore, developing a robust, backpropagation-free learning paradigm that bypasses non-convex optimization traps and the heavy computational overhead imposed by iterative gradient updates and AD to guarantee numerical stability and efficiency remains an urgent challenge in the SciML community.
	
	In this context, randomized learning methods have emerged as a promising alternative, offering a fundamental departure from the gradient descent paradigms used in conventional neural networks. These models typically rely on a two-step training strategy: first, hidden-layer weights and biases are randomly assigned; second, the output weights are determined through analytical computation. Early manifestations of this concept include the Random Vector Functional Link Network (RVFLN) \cite{ref_pao} and feed forward neural networks with random weights proposed by Schmidt et al. \cite{ref_schmidt}. Subsequently, Extreme Learning Machines (ELM) \cite{ref_elm} gained widespread traction in machine vision and industrial processes due to their remarkable learning speed and ease of implementation. 
	
	To address the issues of node quality and network scale configuration inherent in static randomized networks, incremental construction methods such as Incremental Random Weight Neural Networks (IRWNN) \cite{ref_irwnn} were developed to build architectures node by node. Building upon these foundations, Stochastic Configuration Networks (SCNs) \cite{ref6} introduced a rigorous supervisory mechanism for parameter allocation. Unlike earlier randomized models, SCNs randomly configure hidden parameters within data-dependent adaptive ranges under an inequality constraint, which guarantees the universal approximation property. By combining this constructive strategy with an analytical linear least squares solver for output weights, SCNs enable exceptionally fast modeling while maintaining strong generalization capabilities. The efficacy of this framework has inspired numerous advancements, including DeepSCN \cite{ref7}, algorithmic optimizations via genetic algorithms \cite{ref8} or Monte Carlo tree search \cite{ref9,ref10}, and successful applications across diverse domains \cite{ref11,ref12,ref13,ref14,ref15}.
	
	However, standard SCNs are inherently purely data-driven architectures, limiting their direct applicability in SciML scenarios where high-quality data is scarce and systems are governed by strict physical laws. To bridge this gap, Xu et al. \cite{ref16} introduced a Physics-Informed Stochastic Configuration Network (PISCN). Yet, this framework suffers from two fundamental limitations. First, during the critical hidden node configuration phase, the node selection process does not incorporate physical information constraints. Second, the subsequent training process reverts to traditional gradient descent, abandoning the analytical linear least squares solver of the original SCN framework. This significantly degrades its modeling efficiency and negates the primary computational advantage of randomized networks.
	
	In addition, operator-theoretic learning approaches, such as Koopman operator theory, provide computationally efficient methods for modeling large-scale nonlinear systems. Within this framework, Wu et al. proposed a parallelizable multi-step EDMD method that decomposes the least-squares identification problem across prediction horizons and state coordinates, enabling efficient offline modeling \cite{ref_multistep_koopman}. However, the method remains data-driven and does not incorporate physical constraints.
	
	To retain the computational efficiency of SCNs in unsupervised and semi-supervised physics-informed learning, this paper proposes a novel, backpropagation-free framework: the \textbf{Physics-Informed Stochastic Configuration Machine (PI-SCM)}. Our approach fundamentally restructures the optimization paradigm for solving complex differential equations. The main contributions of this work are summarized as follows:

	1) At the core of PI-SCM is a local linearization of the nonlinear differential operator. The operator Jacobians with respect to the state variables and their derivatives are evaluated analytically, allowing the physical residual to be expressed in terms of increments in the network approximation. The resulting linearized physical equations are then combined with the observational constraints to form a \textbf{unified linearized algebraic system}.
	
	2) At each construction or update step, the output coefficients are computed using a pseudoinverse-based linear least-squares solver. Because the required derivatives are evaluated analytically, the procedure does not rely on automatic differentiation. PI-SCM therefore provides a \textbf{backpropagation-free training mechanism} while retaining the governing equations as explicit constraints on the learned solution.
	
	3) The framework comprises three progressive algorithms. PI-SC-I performs a localized update when a new hidden node is added, PI-SC-II updates the nodes within a sliding window, and PI-SC-III globally updates all constructed nodes. These algorithms offer different trade-offs between computational cost and approximation accuracy. Under the stated assumptions and acceptance conditions, their update rules ensure \textbf{monotonic residual reduction}, while the accompanying universal approximation property provides a theoretical foundation for the framework.
	
	4) The same linearized formulation is also extended to inverse problems. By updating physical parameters together with the network approximation, PI-SCM supports \textbf{state reconstruction and parameter identification} without introducing a separate backpropagation-based optimization procedure.
	
	The remainder of this paper is organized as follows. Section \ref{sec:preliminaries} introduces the preliminary concepts, defining the network architecture and formulating the forward and inverse physical problems. Section \ref{sec:PISCM} establishes the theoretical and algorithmic foundations of the PI-SCM framework, rigorously proving its universal approximation property and outlining the hyperparameter configuration strategy. Section \ref{sec:experiments} presents comprehensive numerical experiments to validate the predictive accuracy and computational efficiency of the proposed framework against backpropagation-based PINNs on benchmark physical systems. Finally, Section \ref{sec:conclusion} summarizes the core findings of this study and discusses potential avenues for future research.
	
	\section{Preliminaries of the PI-SCM Framework and Problem Statement}\label{sec:preliminaries}
	
	\subsection{Network Architecture of PI-SCM}\label{sec:network_architecture}
	
	The fundamental distinction between the proposed Physics-Informed Stochastic Configuration Machine (PI-SCM) and conventional backpropagation-based networks lies in its constructive, backpropagation-free architecture. Unlike standard deep neural networks with fixed topologies, PI-SCM constructs a single-hidden-layer neural network incrementally node-by-node.
	
	Suppose the network has generated $L$ hidden nodes. For a given input coordinate vector $\mathbf{x} = (x_1, x_2, \dots, x_n)^\top$, the output of the $k$-th hidden node ($k = 1, 2, \dots, L$) is defined as
	\begin{equation}\label{eq:hidden_node}
		h_k(\mathbf{x}) = g(\mathbf{w}_k^\top \mathbf{x} + b_k),
	\end{equation}
	where $g(\cdot)$ denotes an activation function, $\mathbf{w}_k \in \mathbb{R}^n$ are the hidden weights, and $b_k \in \mathbb{R}$ is the corresponding hidden bias. 
	
	Let $\boldsymbol{\beta}_k \in \mathbb{R}^m$ denote the output weights. The global state approximation of the PI-SCM with $L$ hidden nodes, denoted as $\mathbf{u}_L(\mathbf{x})$, is computed as:
	\begin{equation}\label{eq:network_output}
		\mathbf{u}_L(\mathbf{x}) = \sum_{k=1}^{L} \boldsymbol{\beta}_k h_k(\mathbf{x}).
	\end{equation}
	
	In standard PINNs, the parameters $\{\mathbf{w}_k, b_k, \boldsymbol{\beta}_k\}$ are updated simultaneously via gradient descent. In contrast, the PI-SCM framework randomly assigns the hidden parameters $\mathbf{w}_k$ and $b_k$ within a problem-dependent adaptive range $[-\tau, \tau]$. These parameters are fixed once assigned. The optimal output weights $\boldsymbol{\beta}_k$ are determined analytically via pseudo-inverse solvers. This architecture serves as the foundation for solving the problems defined below.
	
	\subsection{Forward Problem}\label{sec:forward_problem}
	
	We formulate the proposed framework for a general nonlinear differential system of order $q \geq 1$.
	
	Consider an open bounded domain $\Omega \subset \mathbb{R}^n$ governing the underlying physical mechanisms, and let $\partial\Omega$ denote its boundary. The physical state of the system is characterized by an unknown vector-valued function $\mathbf{u}: \Omega \to \mathbb{R}^m$. To represent its high-order differential state, we introduce a multi-index $\boldsymbol{\gamma}=(\gamma_1,\gamma_2,\dots,\gamma_n) \in \mathbb{N}_0^n$, with $|\boldsymbol{\gamma}|=\sum_{j=1}^n\gamma_j$, and define:
	\begin{equation}\label{eq:multi_index_derivative}
		D^{\boldsymbol{\gamma}}\mathbf{u}(\mathbf{x})
		=
		\frac{\partial^{|\boldsymbol{\gamma}|}\mathbf{u}(\mathbf{x})}
		{\partial x_1^{\gamma_1}\partial x_2^{\gamma_2}\cdots\partial x_n^{\gamma_n}},
		\qquad D^{\mathbf{0}}\mathbf{u}(\mathbf{x})=\mathbf{u}(\mathbf{x}).
	\end{equation}
	
	For any nonnegative integer $s$, let $\mathcal{I}_s=\{\boldsymbol{\gamma}\in\mathbb{N}_0^n:|\boldsymbol{\gamma}|\leq s\}$ denote the set of all derivative indices up to order $s$. The corresponding generalized differential state is defined as
	\begin{equation}\label{eq:high_order_state}
		\mathcal{D}^{(s)}\mathbf{u}(\mathbf{x})
		:=
		\left\{D^{\boldsymbol{\gamma}}\mathbf{u}(\mathbf{x})\right\}_{\boldsymbol{\gamma}\in\mathcal{I}_s}.
	\end{equation}
	
	Consequently, the generalized mapping of the nonlinear high-order physical system can be formulated as
	\begin{equation}\label{eq:pde_forward}
		\mathbf{F}\!\left(\mathbf{x}, \mathcal{D}^{(q)}\mathbf{u}(\mathbf{x})\right) = \mathbf{0}, \quad \forall \mathbf{x} \in \Omega,
	\end{equation}
	subject to the initial/boundary conditions
	\begin{equation}\label{eq:bc_forward}
		\mathcal{B}\!\left(\mathbf{x}, \mathcal{D}^{(q_b)}\mathbf{u}(\mathbf{x})\right) = \mathbf{0}, \quad \forall \mathbf{x} \in \partial\Omega,
	\end{equation}
	where $0\leq q_b\leq q$, $\mathbf{F}$ is a nonlinear differential operator, and $\mathcal{B}$ represents the specific boundary or initial constraint operators.
	
	Evaluating these continuous residuals computationally requires discretization. Let $\Omega_p \subset \Omega$ denote the finite set of interior physical collocation points, $\Omega_b \subset \partial\Omega$ denote the boundary/initial training points, and $\Omega_d \subset \Omega$ denote the set of observational data points (in semi-supervised scenarios). 
	
	Consequently, the objective of the forward problem is to determine the optimal set of network parameters $\{\mathbf{w}_k, b_k, \boldsymbol{\beta}_k\}_{k=1}^L$ such that the state approximation $\mathbf{u}_L(\mathbf{x})$ simultaneously satisfies the governing physical equations (Eq.~(\ref{eq:pde_forward})) over $\Omega_p$, the initial/boundary conditions (Eq.~(\ref{eq:bc_forward})) over $\Omega_b$, and the empirical measurements over $\Omega_d$.
	
	\subsection{Inverse Problem}\label{sec:inverse_problem}
	
	In many practical scientific and engineering scenarios, the governing differential equations are characterized by unknown physical parameters that need to be inferred from empirical measurements. This constitutes the inverse problem.
	
	To formulate the inverse physical system, we modify the nonlinear high-order differential operator to explicitly incorporate an unknown parameter vector $\boldsymbol{\lambda} \in \mathbb{R}^{n_p}$. The governing equation is extended as:
	\begin{equation}\label{eq:pde_inverse}
		\mathbf{F}\!\left(\mathbf{x}, \mathcal{D}^{(q)}\mathbf{u}(\mathbf{x}); \boldsymbol{\lambda}\right) = \mathbf{0}, \quad \forall \mathbf{x} \in \Omega.
	\end{equation} 
	
	Accordingly, the inverse problem tasks the network with simultaneously determining the optimal network parameters $\{\mathbf{w}_k, b_k, \boldsymbol{\beta}_k\}_{k=1}^L$ and identifying the underlying physical parameter $\boldsymbol{\lambda}$. The optimal joint configuration must ensure that the state approximation satisfies the parameterized physical laws governing the system while accurately fitting the empirical measurements.
	
	\section{Physics-Informed Stochastic Configuration Machine}\label{sec:PISCM}
	\subsection{Forward Problem}\label{sec:algorithm-description}
	The fundamental challenge in optimizing standard PINNs lies in the highly non-convex loss landscape induced by nonlinear differential operators. PI-SCM circumvents this bottleneck by decoupling the nonlinearity through local Taylor expansions, transforming the optimization into a sequence of linear least squares problems. 
	
	We begin with the base localized optimization algorithm, denoted as \textbf{PI-SC-I}. Suppose the network has generated $L-1$ hidden nodes, and the current state approximation is given by $\mathbf{u}_{L-1}(\mathbf{x}) = \sum_{k=1}^{L-1} \boldsymbol{\beta}_k h_k(\mathbf{x})$, where $\boldsymbol{\beta}_k \in \mathbb{R}^m$ denotes the output weight vector, $h_k(\mathbf{x}) = g(\mathbf{w}_k^\top \mathbf{x} + b_k)$ represents the output of the $k$-th hidden node, $g(\cdot)$ is the activation function, and $\mathbf{w}_k, b_k$ are the randomly configured input weights and bias, respectively. When evaluating the addition of the $L$-th node $h_L(\mathbf{x})$, the updated state becomes $\mathbf{u}_L(\mathbf{x}) = \mathbf{u}_{L-1}(\mathbf{x}) + \boldsymbol{\beta}_L h_L(\mathbf{x})$.
	
	Evaluating at the unified collocation points denoted by $\mathbf{x}$, let us define the physical residual at the $L$-th step as:
	\begin{equation}\label{eq:ep}
		\mathbf{e}_{L}^{p} := \mathbf{F}\!\left(\mathbf{x}, \mathcal{D}^{(q)}\mathbf{u}_L\right).
	\end{equation}
	
	Similarly, the corresponding initial/boundary condition residual evaluated at $\mathbf{x}$ is defined as:
	\begin{equation}\label{eq:eb}
		\mathbf{e}_{L}^{b} := \mathcal{B}\!\left(\mathbf{x}, \mathcal{D}^{(q_b)}\mathbf{u}_L\right).
	\end{equation}
	
	By examining the residual difference between step $L$ and step $L-1$, we obtain:
	\begin{equation}\label{eq:e-e}
		\begin{split}
			\mathbf{e}_{L}^{p} - \mathbf{e}_{L-1}^{p} &= \mathbf{F}\!\left(\mathbf{x}, \mathcal{D}^{(q)}(\mathbf{u}_{L-1} + \boldsymbol{\beta}_L h_L)\right) \\
			&\quad - \mathbf{F}\!\left(\mathbf{x}, \mathcal{D}^{(q)}\mathbf{u}_{L-1}\right),\\
			\mathbf{e}_{L}^{b} - \mathbf{e}_{L-1}^{b} &= \mathcal{B}\!\left(\mathbf{x}, \mathcal{D}^{(q_b)}(\mathbf{u}_{L-1} + \boldsymbol{\beta}_L h_L)\right) \\
			&\quad - \mathcal{B}\!\left(\mathbf{x}, \mathcal{D}^{(q_b)}\mathbf{u}_{L-1}\right).
		\end{split}
	\end{equation}
	
	It becomes evident that if the underlying PDE system $\mathbf{F}$ is nonlinear with respect to any component of the generalized differential state $\mathcal{D}^{(q)}\mathbf{u}$, the residual difference in the above equation is inherently nonlinear with respect to the newly added output weight $\boldsymbol{\beta}_L$. This nonlinearity directly precludes the use of standard linear least squares algorithms for explicitly determining $\boldsymbol{\beta}_L$.
	
	To decouple this nonlinearity and project it into a linear subspace, we apply the functional first-order Taylor expansion around the previously established generalized state $\mathcal{D}^{(q)}\mathbf{u}_{L-1}$. By analytically evaluating the local Jacobians of the differential operator with respect to every derivative component up to order $q$, the physical residual is linearized as
	\begin{equation}\label{eq:Taylor}
		\begin{split}
			\mathbf{e}_{L}^{p}
			&= \mathbf{F}\!\left(\mathbf{x},\mathcal{D}^{(q)}\mathbf{u}_L\right)\approx \mathbf{F}\!\left(\mathbf{x},\mathcal{D}^{(q)}\mathbf{u}_{L-1}\right)\\
			&+ \sum_{\boldsymbol{\gamma}\in\mathcal{I}_q}\quad \mathcal{J}_{D^{\boldsymbol{\gamma}}\mathbf{u}}(\mathbf{u}_{L-1})
			\left(D^{\boldsymbol{\gamma}}\mathbf{u}_L-D^{\boldsymbol{\gamma}}\mathbf{u}_{L-1}\right),
		\end{split}
	\end{equation}
	where the local Jacobian matrices evaluated at the complete previous differential state are mathematically denoted as:
	\begin{equation}\label{eq:Jacobian2}
		\mathcal{J}_{D^{\boldsymbol{\gamma}}\mathbf{u}}(\mathbf{u}_{L-1})
		=
		\frac{\partial \mathbf{F}}{\partial(D^{\boldsymbol{\gamma}}\mathbf{u})}
		\Bigg|_{\mathcal{D}^{(q)}\mathbf{u}=\mathcal{D}^{(q)}\mathbf{u}_{L-1}},
		\qquad \boldsymbol{\gamma}\in\mathcal{I}_q,
	\end{equation}
	each yielding a matrix in $\mathbb{R}^{m \times m}$. In particular, the zero-order case $\boldsymbol{\gamma}=\mathbf{0}$ gives $\mathcal{J}_{D^{\mathbf{0}}\mathbf{u}}=\mathcal{J}_{\mathbf{u}}$.
	
	Because $\boldsymbol{\beta}_L$ is independent of $\mathbf{x}$, the differential increment associated with every multi-index satisfies:
	\begin{equation}\label{eq:high_order_increment}
		D^{\boldsymbol{\gamma}}\mathbf{u}_L-D^{\boldsymbol{\gamma}}\mathbf{u}_{L-1}
		=\boldsymbol{\beta}_L D^{\boldsymbol{\gamma}}h_L(\mathbf{x}),
		\qquad \boldsymbol{\gamma}\in\mathcal{I}_q.
	\end{equation}
	
	Substituting (\ref{eq:high_order_increment}) into the linearized expansion yields
	\begin{equation}\label{eq:ep_linear}
		\mathbf{e}_{L}^{p} \approx \mathbf{e}_{L-1}^{p} + \mathbf{A}_L(\mathbf{x}) \boldsymbol{\beta}_L,
	\end{equation}
	where
	\begin{equation}\label{eq:high_order_AL}
		\mathbf{A}_L(\mathbf{x})
		=
		\sum_{\boldsymbol{\gamma}\in\mathcal{I}_q}
		\mathcal{J}_{D^{\boldsymbol{\gamma}}\mathbf{u}}(\mathbf{u}_{L-1})
		D^{\boldsymbol{\gamma}}h_L(\mathbf{x}).
	\end{equation}
	
	The highly nonlinear high-order local residual evaluation is thus reduced to a purely linear mapping with respect to $\boldsymbol{\beta}_L$.

	Regarding the initial/boundary conditions, we assume $\mathcal{B}$ can be formulated as a generalized high-order boundary operator:
	\begin{equation}\label{eq:boundary_robin}
		\mathcal{B}\!\left(\mathbf{x},\mathcal{D}^{(q_b)}\mathbf{u}\right)
		=
		\sum_{\boldsymbol{\gamma}\in\mathcal{I}_{q_b}}
		\mathbf{P}_{\boldsymbol{\gamma}}(\mathbf{x})D^{\boldsymbol{\gamma}}\mathbf{u}
		-\mathbf{g}_{bc}(\mathbf{x})=\mathbf{0},
	\end{equation}
	where $\mathbf{P}_{\boldsymbol{\gamma}}(\mathbf{x})$ denotes the coefficient matrix associated with the derivative $D^{\boldsymbol{\gamma}}\mathbf{u}$, and $\mathbf{g}_{bc}(\mathbf{x})$ represents the prescribed boundary data. For $q_b=1$, selecting the derivative coefficients along the outward normal direction yields the standard Robin condition. Taking the difference between the boundary residuals of step $L$ and step $L-1$, we have:
	\begin{equation}\label{eq:eb_diff}
		\mathbf{e}_{L}^{b} - \mathbf{e}_{L-1}^{b}
		=
		\left(
		\sum_{\boldsymbol{\gamma}\in\mathcal{I}_{q_b}}
		\mathbf{P}_{\boldsymbol{\gamma}}(\mathbf{x})D^{\boldsymbol{\gamma}}h_L(\mathbf{x})
		\right)\boldsymbol{\beta}_L.
	\end{equation}
	
	It is evident that this formulation is strictly linear with respect to $\boldsymbol{\beta}_L$, which yields
	\begin{equation}\label{eq:eb_linear}
		\mathbf{e}_{L}^{b} = \mathbf{e}_{L-1}^{b} + \mathbf{C}_L(\mathbf{x}) \boldsymbol{\beta}_L,
	\end{equation}
	where $\mathbf{C}_L(\mathbf{x}) = \sum_{\boldsymbol{\gamma}\in\mathcal{I}_{q_b}}\mathbf{P}_{\boldsymbol{\gamma}}(\mathbf{x})D^{\boldsymbol{\gamma}}h_L(\mathbf{x})$. 
	
	Furthermore, to empower the framework with semi-supervised learning capabilities by integrating sparse observational data, the data residual evaluated at the points denoted by $\mathbf{x}$ is defined as
	\begin{equation}\label{eq:ed}
		\mathbf{e}_{L}^{d} := \mathbf{u}_{data}(\mathbf{x}) - \mathbf{u}_L(\mathbf{x}),
	\end{equation}
	where $\mathbf{u}_{data}(\mathbf{x})$ denotes the observations. The difference between the data residuals at step $L$ and step $L-1$ is computed as:
	\begin{equation}\label{eq:ed_diff}
		\begin{split}
			\mathbf{e}_{L}^{d} - \mathbf{e}_{L-1}^{d} &= \mathbf{u}_{data}(\mathbf{x}) - \mathbf{u}_{L-1}(\mathbf{x}) - \boldsymbol{\beta}_L h_L(\mathbf{x}) \\
			&\quad - \left(\mathbf{u}_{data}(\mathbf{x}) - \mathbf{u}_{L-1}(\mathbf{x})\right) \\
			&= -h_L(\mathbf{x})\mathbf{I}_m \boldsymbol{\beta}_L.
		\end{split}
	\end{equation}
	
	Thus, we have:
	\begin{equation}\label{eq:ed_linear}
		\mathbf{e}_{L}^{d} = \mathbf{e}_{L-1}^{d} + \mathbf{D}_L(\mathbf{x}) \boldsymbol{\beta}_L,
	\end{equation}
	where $\mathbf{D}_L(\mathbf{x}) = -h_L(\mathbf{x})\mathbf{I}_m$.
	
	Assuming that at the $L$-th incremental step, we aim to simultaneously minimize the physical residual $\mathbf{e}_{L}^{p}$, the boundary residual $\mathbf{e}_{L}^{b}$, and the data residual $\mathbf{e}_{L}^{d}$, the comprehensive optimization objective for the PI-SC-I algorithm is formulated as the following linear least squares loss function:
	\begin{equation}\label{eq:opt_problem_semi}
		\begin{aligned}
			\mathcal{L}(\boldsymbol{\beta}_L)
			&= \frac{1}{N_p} \left\| \mathbf{A}_L(\mathbf{x}) \boldsymbol{\beta}_L + \mathbf{e}_{L-1}^{p} \right\|_{\Omega_p}^2 \\
			&\quad+ \frac{\omega_{bc}}{N_b} \left\| \mathbf{C}_L(\mathbf{x}) \boldsymbol{\beta}_L + \mathbf{e}_{L-1}^{b} \right\|_{\Omega_b}^2 \\
			&\quad+ \frac{\omega_{data}}{N_d} \left\| \mathbf{D}_L(\mathbf{x}) \boldsymbol{\beta}_L + \mathbf{e}_{L-1}^{d} \right\|_{\Omega_d}^2,
		\end{aligned}
	\end{equation}
	where $\omega_{bc}$ and $\omega_{data}$ are the penalty parameters balancing the initial/boundary conditions and the observational data against the physical domain. The notations $\|\cdot\|_{\Omega_p}^2$, $\|\cdot\|_{\Omega_b}^2$, and $\|\cdot\|_{\Omega_d}^2$ represent the standard discrete squared $L_2$ norms evaluated over $\Omega_p=\{\mathbf{x}_i^p\}_{i=1}^{N_p}$, $\Omega_b=\{\mathbf{x}_i^b\}_{i=1}^{N_b}$, and $\Omega_d=\{\mathbf{x}_i^d\}_{i=1}^{N_d}$, respectively, where $N_p$, $N_b$, and $N_d$ are the corresponding numbers of physical, boundary, and data points.
	
	At the $L$-th step, the weighted generalized residual vector is defined as
	\begin{equation}\label{eq:weighted_residual}
		\tilde{\mathbf{e}}_L
		=
		\begin{bmatrix}
			\dfrac{1}{\sqrt{N_p}}
			\underset{i=1,\ldots,N_p}{\operatorname{col}}
			\left(\mathbf{e}_L^p(\mathbf{x}_i^p)\right)
			\\
			\sqrt{\dfrac{\omega_{bc}}{N_b}}
			\underset{i=1,\ldots,N_b}{\operatorname{col}}
			\left(\mathbf{e}_L^b(\mathbf{x}_i^b)\right)
			\\
			\sqrt{\dfrac{\omega_{data}}{N_d}}
			\underset{i=1,\ldots,N_d}{\operatorname{col}}
			\left(\mathbf{e}_L^d(\mathbf{x}_i^d)\right)
		\end{bmatrix}.
	\end{equation}
	where $\operatorname{col}(\cdot)$ denotes the vertical concatenation operator.
	
	The three residual blocks in (\ref{eq:opt_problem_semi}) can be
	combined into a unified linear least squares system. Let us define the weighted design matrix $\mathbf{M}_L$ and the corresponding target vector $\mathbf{v}_L$ as
	\begin{equation}\label{eq:design_matrix}
		\begin{aligned}
			\mathbf{M}_L
			&=
			\begin{bmatrix}
				\dfrac{1}{\sqrt{N_p}}
				\underset{i=1,\ldots,N_p}{\operatorname{col}}
				\left(\mathbf{A}_L(\mathbf{x}_i^p)\right)
				\\
				\sqrt{\dfrac{\omega_{bc}}{N_b}}
				\underset{i=1,\ldots,N_b}{\operatorname{col}}
				\left(\mathbf{C}_L(\mathbf{x}_i^b)\right)
				\\
				\sqrt{\dfrac{\omega_{data}}{N_d}}
				\underset{i=1,\ldots,N_d}{\operatorname{col}}
				\left(\mathbf{D}_L(\mathbf{x}_i^d)\right)
			\end{bmatrix},
			&
			\mathbf{v}_L
			&=-\tilde{\mathbf{e}}_{L-1}.
		\end{aligned}
	\end{equation}
	
	Accordingly, the objective in (\ref{eq:opt_problem_semi}) can be written as
	\begin{equation}\label{eq:compact_local_ls}
		\mathcal{L}(\boldsymbol{\beta}_L)
		=
		\left\|
		\mathbf{M}_L\boldsymbol{\beta}_L-\mathbf{v}_L
		\right\|_2^2.
	\end{equation}
	
	Consequently, the minimum-norm solution of the linearized least squares problem is explicitly computed as $\widehat{\boldsymbol{\beta}}_L = \mathbf{M}_L^{\dagger} \mathbf{v}_L$. To control the Taylor truncation error of the true nonlinear high-order residual, the adopted output-weight increment is defined as
	\begin{equation}\label{eq:analytical_solution}
		\boldsymbol{\beta}_L^* = \alpha \mathbf{M}_L^{\dagger} \mathbf{v}_L,
	\end{equation}
	where $\dagger$ denotes the Moore-Penrose pseudo-inverse operator, and $\alpha \in (0, 1]$ acts as a damping factor. When $\alpha=1$, (\ref{eq:analytical_solution}) coincides with the optimal linearized solution; when $\alpha<1$, it represents a damped step along the same analytical direction. As will be rigorously established in the subsequent theoretical analysis, introducing this parameter guarantees the existence of a feasible parameter configuration that enforces strict monotonic descent of the true residual.
	
	Furthermore, to ensure numerical stability against potential ill-conditioning of the localized Jacobian evaluations, we compute the pseudo-inverse via the Truncated Singular Value Decomposition (TSVD). By dynamically filtering out unstable singular components below a relative condition threshold $\eta$ (e.g., $\eta = 10^{-10}$), this approach effectively circumvents the numerical instability induced by matrix ill-conditioning.
	
	To rigorously guarantee the effectiveness of the configured
	nonlinear parameters and the asymptotic convergence of the
	algorithm, we introduce a supervisory node selection mechanism.
	Using the weighted residual vector defined in
	(\ref{eq:weighted_residual}), the comprehensive residual norm at the
	$L$-th step is given by
	$\mathcal{E}_L^2=\|\tilde{\mathbf{e}}_L\|^2$.
	In the PI-SC-I algorithm, we establish a contraction evaluation
	metric $\xi_L$:
	\begin{equation}\label{eq:evaluation_metric}
		\xi_L = (r + \mu_L)\mathcal{E}_{L-1}^2 - \mathcal{E}_L^2,
	\end{equation}
	where $r \in (0, 1)$, and $\mu_L = \frac{1-r}{L+1}$. During the generation of the $L$-th hidden node, we perform $T_{max}$ independent trials by randomly sampling the hidden parameters ($\mathbf{w}_L, b_L$) and calculating their corresponding optimal output weights $\boldsymbol{\beta}_L^*$. Full candidate configurations satisfying $\xi_L > 0$ are deposited into a feasible pool. The algorithm ultimately selects the optimal parameter set that maximizes $\xi_L$, thereby completing the algorithm of PI-SC-I and enforcing the steepest residual descent.
	\begin{remark}
		It is imperative to emphasize that the physical residual $\mathbf{e}_L^p$ utilized in computing $\mathcal{E}_L^2$ represents the true physical residual, rather than its linearized Taylor approximation. As will be rigorously proven in Theorem \ref{theorem:PI-SCM-I}, adhering to this true residual is the fundamental prerequisite for guaranteeing the universal approximation property of the PI-SC-I framework.
	\end{remark}
	
	In the original SCN framework \cite{ref6}, a global evaluation of the output weights was designed after the local node selection to overcome the slow convergence caused by the strictly local constructive scheme. Here, we adopt an analogous global updating mechanism for the proposed framework, establishing the \textbf{PI-SC-III} algorithm. Let $\mathbf{H}_L(\mathbf{x}) = [\mathbf{h}_1(\mathbf{x})\mathbf{I}_m, \mathbf{h}_2(\mathbf{x})\mathbf{I}_m, \dots, \mathbf{h}_L(\mathbf{x})\mathbf{I}_m]$ denote the global hidden layer output matrix, and the global output weight vector is denoted as $\boldsymbol{\beta}_{glob} = [\boldsymbol{\beta}_1^\top, \boldsymbol{\beta}_2^\top, \dots, \boldsymbol{\beta}_L^\top]^\top$. The globally updated state approximation is denoted as $\mathbf{u}_L(\mathbf{x}) = \mathbf{H}_L(\mathbf{x}) \boldsymbol{\beta}_{glob}$.
	
	The physical residual at the $L$-th step is thus given by:
	\begin{equation}\label{eq:global_ep}
		\mathbf{e}_{L}^{p} = \mathbf{F}\!\left(\mathbf{x}, \mathcal{D}^{(q)}(\mathbf{H}_L(\mathbf{x}) \boldsymbol{\beta}_{glob})\right).
	\end{equation}
	
	Similarly, considering the nonlinearity of the differential operator $\mathbf{F}$ with respect to $\boldsymbol{\beta}_{glob}$, we apply the first-order Taylor expansion around the locally constructed generalized baseline state $\mathcal{D}^{(q)}\mathbf{u}_{loc}$. Specifically, let $\boldsymbol{\beta}_{loc}^* = [\boldsymbol{\beta}_1^\top, \boldsymbol{\beta}_2^\top, \dots, (\boldsymbol{\beta}_L^*)^\top]^\top$ denote the configuration achieved by the PI-SC-I algorithm at the current step, yielding the local state approximation $\mathbf{u}_{loc}(\mathbf{x}) = \mathbf{H}_L(\mathbf{x}) \boldsymbol{\beta}_{loc}^*$ and its corresponding physical residual $\mathbf{e}_{loc}^{p}$.
	
	Recalling the local Jacobian matrices defined in Eq. \eqref{eq:Jacobian2}, we evaluate them directly at the current localized configuration $\mathbf{u}_{loc}(\mathbf{x})$. The physical residual is thereby linearized via these Jacobian operators as
	\begin{equation}\label{eq:global_ep_expanded}
		\begin{split}
			\mathbf{e}_L^{p} &\approx \mathbf{e}_{loc}^{p}
			+ \sum_{\boldsymbol{\gamma}\in\mathcal{I}_q}
			\mathcal{J}_{D^{\boldsymbol{\gamma}}\mathbf{u}}(\mathbf{u}_{loc})
			\left(D^{\boldsymbol{\gamma}}\mathbf{H}_L(\mathbf{x})\boldsymbol{\beta}_{glob}
			-D^{\boldsymbol{\gamma}}\mathbf{u}_{loc}(\mathbf{x})\right) \\
			&= \mathbf{A}_{glob}(\mathbf{x}) \boldsymbol{\beta}_{glob} - \mathbf{b}(\mathbf{x}),
		\end{split}
	\end{equation}
	where
	\begin{equation}\label{eq:high_order_global_design}
		\begin{aligned}
			\mathbf{A}_{glob}(\mathbf{x})
			&=\sum_{\boldsymbol{\gamma}\in\mathcal{I}_q}
			\mathcal{J}_{D^{\boldsymbol{\gamma}}\mathbf{u}}(\mathbf{u}_{loc})
			D^{\boldsymbol{\gamma}}\mathbf{H}_L(\mathbf{x}),\\
			\mathbf{b}(\mathbf{x})
			&=\sum_{\boldsymbol{\gamma}\in\mathcal{I}_q}
			\mathcal{J}_{D^{\boldsymbol{\gamma}}\mathbf{u}}(\mathbf{u}_{loc})
			D^{\boldsymbol{\gamma}}\mathbf{u}_{loc}(\mathbf{x})-\mathbf{e}_{loc}^{p}.
		\end{aligned}
	\end{equation}
	
	Following the same rationale, the initial/boundary condition residual is globally formulated as
	\begin{equation}\label{eq:global_eb}
		\begin{split}
			\mathbf{e}_{L}^{b} &= \sum_{\boldsymbol{\gamma}\in\mathcal{I}_{q_b}}
			\mathbf{P}_{\boldsymbol{\gamma}}(\mathbf{x})D^{\boldsymbol{\gamma}}\mathbf{H}_L(\mathbf{x})\boldsymbol{\beta}_{glob}
			- \mathbf{g}_{bc}(\mathbf{x}) \\
			&= \mathbf{C}_{glob}(\mathbf{x}) \boldsymbol{\beta}_{glob} - \mathbf{g}_{bc}(\mathbf{x}),
		\end{split}
	\end{equation}
	where $\mathbf{C}_{glob}(\mathbf{x}) = \sum_{\boldsymbol{\gamma}\in\mathcal{I}_{q_b}}\mathbf{P}_{\boldsymbol{\gamma}}(\mathbf{x})D^{\boldsymbol{\gamma}}\mathbf{H}_L(\mathbf{x})$.
	
	The data residual is globally formulated as:
	\begin{equation}\label{eq:global_ed}
		\begin{split}
			\mathbf{e}_{L}^{d} &= \mathbf{u}_{data}(\mathbf{x}) - \mathbf{H}_L(\mathbf{x}) \boldsymbol{\beta}_{glob} \\
			&= \mathbf{u}_{data}(\mathbf{x}) - \mathbf{D}_{glob}(\mathbf{x}) \boldsymbol{\beta}_{glob},
		\end{split}
	\end{equation}
	where $\mathbf{D}_{glob}(\mathbf{x}) = \mathbf{H}_L(\mathbf{x})$.
	
	Building upon these global algebraic formulations, the comprehensive objective function for the PI-SC-III algorithm is constructed as a linear least squares problem:
	\begin{equation}\label{eq:opt_problem_global}
		\begin{split}
			\mathcal{L}_{glob}(\boldsymbol{\beta}_{glob}) &= \frac{1}{N_p} \left\| \mathbf{A}_{glob}(\mathbf{x}) \boldsymbol{\beta}_{glob} - \mathbf{b}(\mathbf{x}) \right\|_{\Omega_p}^2 \\
			&\quad + \frac{\omega_{bc}}{N_b} \left\| \mathbf{C}_{glob}(\mathbf{x}) \boldsymbol{\beta}_{glob} - \mathbf{g}_{bc}(\mathbf{x}) \right\|_{\Omega_b}^2 \\
			&\quad + \frac{\omega_{data}}{N_d} \left\| \mathbf{D}_{glob}(\mathbf{x}) \boldsymbol{\beta}_{glob} - \mathbf{u}_{data}(\mathbf{x}) \right\|_{\Omega_d}^2.
		\end{split}
	\end{equation}
	
	The three terms in (\ref{eq:opt_problem_global}) can be combined into a unified linear least squares system. The corresponding weighted global design matrix $\mathbf{M}_{glob}$ and target vector $\mathbf{v}_{glob}$ are given below:
	\begin{subequations}\label{eq:global_matrices}
	\begin{equation}
			\mathbf{M}_{glob}
			=
			\begin{bmatrix}
				\dfrac{1}{\sqrt{N_p}}
				\underset{i=1,\ldots,N_p}{\operatorname{col}}
				\left(\mathbf{A}_{glob}(\mathbf{x}_i^p)\right)
				\\
				\sqrt{\dfrac{\omega_{bc}}{N_b}}
				\underset{i=1,\ldots,N_b}{\operatorname{col}}
				\left(\mathbf{C}_{glob}(\mathbf{x}_i^b)\right)
				\\
				\sqrt{\dfrac{\omega_{data}}{N_d}}
				\underset{i=1,\ldots,N_d}{\operatorname{col}}
				\left(\mathbf{D}_{glob}(\mathbf{x}_i^d)\right)
			\end{bmatrix}.
	\end{equation}
	\begin{equation}
			\mathbf{v}_{glob}
			=
			\begin{bmatrix}
				\dfrac{1}{\sqrt{N_p}}
				\underset{i=1,\ldots,N_p}{\operatorname{col}}
				\left(\mathbf{b}(\mathbf{x}_i^p)\right)
				\\
				\sqrt{\dfrac{\omega_{bc}}{N_b}}
				\underset{i=1,\ldots,N_b}{\operatorname{col}}
				\left(\mathbf{g}_{bc}(\mathbf{x}_i^b)\right)
				\\
				\sqrt{\dfrac{\omega_{data}}{N_d}}
				\underset{i=1,\ldots,N_d}{\operatorname{col}}
				\left(\mathbf{u}_{data}(\mathbf{x}_i^d)\right)
			\end{bmatrix}.
	\end{equation}
	\end{subequations}
	
	Accordingly, the objective in (\ref{eq:opt_problem_global}) can be written as
	\begin{equation}\label{eq:compact_global_ls}
		\mathcal{L}_{glob}(\boldsymbol{\beta}_{glob})
		=
		\left\|
		\mathbf{M}_{glob}\boldsymbol{\beta}_{glob}
		-
		\mathbf{v}_{glob}
		\right\|_2^2.
	\end{equation}
	
	In practice, as the hidden layer incrementally expands, the global design matrix $\mathbf{M}_{glob}$ becomes increasingly susceptible to severe ill-conditioning. Consequently, the output weights for the entire PI-SC-III network are determined by extending the aforementioned TSVD strategy to evaluate the global pseudo-inverse:
	\begin{equation}\label{eq:global_analytical_solution}
		\tilde{\boldsymbol{\beta}}_{glob}^* = \mathbf{M}_{glob}^{\dagger} \mathbf{v}_{glob}.
	\end{equation}
	
	This unified pseudo-inverse approach thoroughly recalculates the entire hidden layer's output configuration. However, because $\tilde{\boldsymbol{\beta}}_{glob}^*$ is analytically optimal exclusively within the linearized subspace, the nonlinear nature of the underlying PDE operator might occasionally induce a severe Taylor truncation penalty. Applying this full step blindly could lead to an unexpected surge in the true nonlinear residual.
	
	To guarantee the monotonic convergence of the network within PI-SC-III, we introduce a discrete line search mechanism. We define the global search direction as $\Delta \boldsymbol{\beta} = \tilde{\boldsymbol{\beta}}_{glob}^* - \boldsymbol{\beta}_{loc}^*$, yielding a parameterized weight trajectory for a scaling factor $\alpha \in [0, 1]$:
	\begin{equation}
		\boldsymbol{\beta}(\alpha) = \boldsymbol{\beta}_{loc}^* + \alpha \Delta \boldsymbol{\beta}.
	\end{equation}
	
	Let $\mathcal{E}_L^2(\alpha)$ denote the comprehensive true residual norm evaluated with the candidate weights $\boldsymbol{\beta}(\alpha)$. The algorithm computes in parallel a discrete set of uniformly distributed candidate step sizes, denoted as $\mathcal{A} \subset [0, 1]$, which includes the boundary limits $1.0$ and $0.0$.
	
	The best scaling factor $\alpha^*$ within $\mathcal{A}$ is selected as
	\begin{equation}
		\alpha^* = \mathop{\arg\min}_{\alpha \in \mathcal{A}} \mathcal{E}_L^2(\alpha).
	\end{equation}
	
	The configuration corresponding to this $\alpha^*$ is then adopted as the final updated weight vector $\boldsymbol{\beta}^* = \boldsymbol{\beta}(\alpha^*)$.
	
	Although the global updating mechanism of PI-SC-III achieves higher accuracy, the computational complexity grows rapidly with the number of hidden nodes. Consequently, for large-scale data analysis or complex physical simulations requiring dense node configurations, this exhaustive recalculation incurs substantial computational overhead. To strike a balance between approximation accuracy and computational efficiency, a sliding-window strategy, as originally proposed in the SCN literature \cite{ref6}, can be integrated into the framework to formulate the \textbf{PI-SC-II} algorithm.
	
	Specifically, given a window size $W < L$, the update in PI-SC-II is restricted to the most recent $W$ nodes while freezing the preceding $L-W$ nodes. Following the conventional SC-II partitioning scheme, we decompose the hidden layer matrix and weights as $\mathbf{H}_L(\mathbf{x}) = [\mathbf{H}_{pre}(\mathbf{x}), \mathbf{H}_{win}(\mathbf{x})]$ and $\boldsymbol{\beta}_{glob} = [\boldsymbol{\beta}_{pre}^\top, \boldsymbol{\beta}_{win}^\top]^\top$, respectively. The frozen state is thus fixed as $\mathbf{u}_{L-W}(\mathbf{x}) = \mathbf{H}_{pre}(\mathbf{x}) \boldsymbol{\beta}_{pre}$.
	
	To extend this block-wise projection principle to our physics-informed setting, we first systematically analyze the remaining discrepancies across the computational domains. Instead of re-evaluating the entire network, the active window aims to fit the gap between the global targets ($\mathbf{b}$, $\mathbf{g}_{bc}$, $\mathbf{u}_{data}$) and the contributions already provided by the frozen state $\mathbf{u}_{L-W}$. These residual target vectors are explicitly evaluated over their respective domains as:
	\begin{equation}\label{eq:window_residuals}
		\begin{aligned}
			\mathbf{r}_{L-W}^{p}(\mathbf{x}) &= \mathbf{b}(\mathbf{x}) - \sum_{\boldsymbol{\gamma}\in\mathcal{I}_q}
			\mathcal{J}_{D^{\boldsymbol{\gamma}}\mathbf{u}}(\mathbf{u}_{loc})D^{\boldsymbol{\gamma}}\mathbf{u}_{L-W}(\mathbf{x}), \\
			\mathbf{r}_{L-W}^{b}(\mathbf{x}) &= \mathbf{g}_{bc}(\mathbf{x}) - \sum_{\boldsymbol{\gamma}\in\mathcal{I}_{q_b}}
			\mathbf{P}_{\boldsymbol{\gamma}}(\mathbf{x})D^{\boldsymbol{\gamma}}\mathbf{u}_{L-W}(\mathbf{x}), \\
			\mathbf{r}_{L-W}^{d}(\mathbf{x}) &= \mathbf{u}_{data}(\mathbf{x}) - \mathbf{u}_{L-W}(\mathbf{x}).
		\end{aligned}
	\end{equation}
	
	Concurrently, applying the linearized operators exclusively to the windowed hidden nodes $\mathbf{H}_{win}$ yields:
	\begin{equation}\label{eq:window_design_matrices}
		\begin{split}
			\mathbf{A}_W(\mathbf{x}) &= \sum_{\boldsymbol{\gamma}\in\mathcal{I}_q}
			\mathcal{J}_{D^{\boldsymbol{\gamma}}\mathbf{u}}(\mathbf{u}_{loc})D^{\boldsymbol{\gamma}}\mathbf{H}_{win}(\mathbf{x}), \\
			\mathbf{C}_W(\mathbf{x}) &= \sum_{\boldsymbol{\gamma}\in\mathcal{I}_{q_b}}
			\mathbf{P}_{\boldsymbol{\gamma}}(\mathbf{x})D^{\boldsymbol{\gamma}}\mathbf{H}_{win}(\mathbf{x}), \\
			\mathbf{D}_W(\mathbf{x}) &= \mathbf{H}_{win}(\mathbf{x}).
		\end{split}
	\end{equation}
	
	The comprehensive objective function is constructed as a unified least squares problem:
	\begin{equation}\label{eq:opt_problem_window}
		\begin{split}
			\mathcal{L}_{win}(\boldsymbol{\beta}_{win}) &= \frac{1}{N_p} \left\| \mathbf{A}_W(\mathbf{x}) \boldsymbol{\beta}_{win} - \mathbf{r}_{L-W}^{p}(\mathbf{x}) \right\|_{\Omega_p}^2 \\
			&\quad+ \frac{\omega_{bc}}{N_b} \left\| \mathbf{C}_W(\mathbf{x}) \boldsymbol{\beta}_{win} - \mathbf{r}_{L-W}^{b}(\mathbf{x}) \right\|_{\Omega_b}^2 \\
			&\quad+ \frac{\omega_{data}}{N_d} \left\| \mathbf{D}_W(\mathbf{x}) \boldsymbol{\beta}_{win} - \mathbf{r}_{L-W}^{d}(\mathbf{x}) \right\|_{\Omega_d}^2.
		\end{split}
	\end{equation}
	
	The three terms in (\ref{eq:opt_problem_window}) can be combined into a unified linear least squares system. The corresponding weighted windowed design matrix $\mathbf{M}_W$ and target vector $\mathbf{v}_W$ are given below:
	\begin{subequations}\label{eq:window_matrices}
	\begin{equation}
			\mathbf{M}_W
			=
			\begin{bmatrix}
				\dfrac{1}{\sqrt{N_p}}
				\underset{i=1,\ldots,N_p}{\operatorname{col}}
				\left(\mathbf{A}_W(\mathbf{x}_i^p)\right)
				\\
				\sqrt{\dfrac{\omega_{bc}}{N_b}}
				\underset{i=1,\ldots,N_b}{\operatorname{col}}
				\left(\mathbf{C}_W(\mathbf{x}_i^b)\right)
				\\
				\sqrt{\dfrac{\omega_{data}}{N_d}}
				\underset{i=1,\ldots,N_d}{\operatorname{col}}
				\left(\mathbf{D}_W(\mathbf{x}_i^d)\right)
			\end{bmatrix}.
	\end{equation}
	\begin{equation}
			\mathbf{v}_W
			=
			\begin{bmatrix}
				\dfrac{1}{\sqrt{N_p}}
				\underset{i=1,\ldots,N_p}{\operatorname{col}}
				\left(\mathbf{r}_{L-W}^{p}(\mathbf{x}_i^p)\right)
				\\
				\sqrt{\dfrac{\omega_{bc}}{N_b}}
				\underset{i=1,\ldots,N_b}{\operatorname{col}}
				\left(\mathbf{r}_{L-W}^{b}(\mathbf{x}_i^b)\right)
				\\
				\sqrt{\dfrac{\omega_{data}}{N_d}}
				\underset{i=1,\ldots,N_d}{\operatorname{col}}
				\left(\mathbf{r}_{L-W}^{d}(\mathbf{x}_i^d)\right)
			\end{bmatrix}.
	\end{equation}
	\end{subequations}
	
	Accordingly, the objective in (\ref{eq:opt_problem_window}) can be written as
	\begin{equation}\label{eq:compact_window_ls}
		\mathcal{L}_{win}(\boldsymbol{\beta}_{win})
		=
		\left\|
		\mathbf{M}_W\boldsymbol{\beta}_{win}
		-
		\mathbf{v}_W
		\right\|_2^2.
	\end{equation}
	
	Consistent with our strategy to suppress potential ill-conditioning, the output weights for the current window are evaluated via the TSVD pseudo-inverse:
	\begin{equation}\label{eq:window_analytical_solution}
		\boldsymbol{\beta}_{win}^* = \mathbf{M}_W^{\dagger} \mathbf{v}_W.
	\end{equation}
	
	Finally, the complete window-updated weight vector is explicitly reassembled as:
	\begin{equation}\label{eq:window_concatenation}
		\tilde{\boldsymbol{\beta}}_{win}^* =
		\begin{bmatrix}
			\boldsymbol{\beta}_{pre}\\
			\boldsymbol{\beta}_{win}^*
		\end{bmatrix}.
	\end{equation}
	
	This sliding-window scheme bounds the number of columns of the design matrix $\mathbf{M}_W$ to $mW$, regardless of the total network width $L$. By dynamically replacing the full-batch recalculation with this windowed block-wise pseudo-inverse, the PI-SC-II framework reduces the computational burden while still reliably preserving the accelerated convergence benefits. However, similar to the global approach in PI-SC-III, because $\tilde{\boldsymbol{\beta}}_{win}^*$ is analytically optimal exclusively within the linearized subspace, applying this windowed step blindly could induce a severe Taylor truncation penalty.
	
	To guarantee the monotonic convergence of the network within PI-SC-II, we introduce the same discrete line search mechanism. The search direction and its associated weight trajectory are defined as
	\begin{equation}
		\Delta\boldsymbol{\beta}
		=
		\tilde{\boldsymbol{\beta}}_{win}^*
		-
		\boldsymbol{\beta}_{loc}^*,
		\qquad
		\boldsymbol{\beta}(\alpha)
		=
		\boldsymbol{\beta}_{loc}^*
		+
		\alpha\Delta\boldsymbol{\beta}.
	\end{equation}
	
	Once this windowed direction is constructed, the step size
	$\alpha^*$ and the final updated weight vector
	$\boldsymbol{\beta}^*=\boldsymbol{\beta}(\alpha^*)$ are determined using the same method described for PI-SC-III.
	
	\subsection{Inverse Problem}\label{sec:inverse problem}
	To extend the PI-SCM framework to inverse problems, we define the governing equation as $\mathbf{F}(\mathbf{x}, \mathcal{D}^{(q)}\mathbf{u}; \boldsymbol{\lambda}) = \mathbf{0}$. Within the localized construction phase (\textbf{PI-SC-I}), the incremental updating objective is formulated to jointly estimate the local output weight $\boldsymbol{\beta}_L$ and the physical parameter $\boldsymbol{\lambda}$. 
	
	To achieve this joint evaluation, we build upon the high-order linearization strategy established in Section \ref{sec:algorithm-description}. By performing a simultaneous first-order Taylor expansion with respect to both $\mathcal{D}^{(q)}\mathbf{u}$ and $\boldsymbol{\lambda}$ around the previous configuration $(\mathcal{D}^{(q)}\mathbf{u}_{L-1}, \boldsymbol{\lambda}_{L-1})$, the linearized physical equation governing the current step is explicitly derived as
	\begin{equation}
		\mathbf{A}_L(\mathbf{x}) \boldsymbol{\beta}_L + \mathbf{J}_{\lambda}(\mathbf{x};\mathbf{u}_{L-1}, \boldsymbol{\lambda}_{L-1}) (\boldsymbol{\lambda}_{L} - \boldsymbol{\lambda}_{L-1}) \approx -\mathbf{e}_{L-1}^p,
	\end{equation}
	where $\mathbf{e}_{L-1}^p$ and $\mathbf{A}_L$ are defined in (\ref{eq:ep_linear}) and (\ref{eq:high_order_AL}), and the parameter Jacobian evaluated at the previous generalized state is defined as $\mathbf{J}_{\lambda}(\mathbf{x};\mathbf{u}_{L-1}, \boldsymbol{\lambda}_{L-1}) = \frac{\partial \mathbf{F}}{\partial \boldsymbol{\lambda}} \Big|_{\mathcal{D}^{(q)}\mathbf{u}=\mathcal{D}^{(q)}\mathbf{u}_{L-1}, \boldsymbol{\lambda}=\boldsymbol{\lambda}_{L-1}}$.
	
	To evaluate the parameter increment directly, we define $\Delta\boldsymbol{\lambda}_L = \boldsymbol{\lambda}_L - \boldsymbol{\lambda}_{L-1}$. This yields the linearized unified system:
	\begin{equation}
		\mathbf{A}_L(\mathbf{x}) \boldsymbol{\beta}_L + \mathbf{J}_{\lambda}(\mathbf{x};\mathbf{u}_{L-1}, \boldsymbol{\lambda}_{L-1}) \Delta\boldsymbol{\lambda}_L \approx -\mathbf{e}_{L-1}^p.
	\end{equation}
	
	We define the localized joint update vector as $\boldsymbol{\theta}_L = [\boldsymbol{\beta}_L^\top, \Delta\boldsymbol{\lambda}_L^\top]^\top$. Casting the equation into a unified matrix-vector product yields the augmented design matrix $\mathbf{A}_{L,aug}(\mathbf{x}) = [\mathbf{A}_L(\mathbf{x}), \mathbf{J}_{\lambda}(\mathbf{x};\mathbf{u}_{L-1}, \boldsymbol{\lambda}_{L-1})]$. Since the boundary constraints and observational data are inherently independent of the physical parameters, their forward design matrices ($\mathbf{C}_L$ and $\mathbf{D}_L$, inherited from (\ref{eq:eb_linear}) and (\ref{eq:ed_linear})) are systematically zero-padded, yielding $\mathbf{C}_{L,aug}(\mathbf{x}) = [\mathbf{C}_L(\mathbf{x}), \mathbf{0}]$ and $\mathbf{D}_{L,aug}(\mathbf{x}) = [\mathbf{D}_L(\mathbf{x}), \mathbf{0}]$. The joint parameter identification is rigorously formulated as a linear least-squares minimization problem:
	\begin{equation}\label{eq:inverse_local_ls}
		\begin{aligned}
			\mathcal{L}(\boldsymbol{\theta}_L)
			&= \frac{1}{N_p} \|\mathbf{A}_{L,aug}(\mathbf{x}) \boldsymbol{\theta}_L + \mathbf{e}_{L-1}^p\|_{\Omega_p}^2 \\
			&\quad + \frac{\omega_{bc}}{N_b} \|\mathbf{C}_{L,aug}(\mathbf{x}) \boldsymbol{\theta}_L + \mathbf{e}_{L-1}^b\|_{\Omega_b}^2 \\
			&\quad + \frac{\omega_{data}}{N_d} \|\mathbf{D}_{L,aug}(\mathbf{x}) \boldsymbol{\theta}_L + \mathbf{e}_{L-1}^d\|_{\Omega_d}^2.
		\end{aligned}
	\end{equation}
	
	Solving this linear least squares problem via TSVD yields the candidate joint solution $\widehat{\boldsymbol{\theta}}_L$. Consistent with the forward PI-SC-I algorithm, the adopted configuration is damped as $\boldsymbol{\theta}_L^*=\alpha\widehat{\boldsymbol{\theta}}_L$, from which $\boldsymbol{\beta}_L^*$ and $\Delta\boldsymbol{\lambda}_L^*$ are extracted.
	
	Subsequently, these candidate configurations are rigorously filtered through the established supervisory node selection mechanism to guarantee monotonic residual descent. Upon successful addition of the $L$-th node, the absolute local baseline is established as $(\mathbf{u}_{loc}, \boldsymbol{\lambda}_{loc})$, where $\boldsymbol{\lambda}_{loc} = \boldsymbol{\lambda}_{L-1} + \Delta\boldsymbol{\lambda}_L^*$.
	
	Updating $\boldsymbol{\lambda}$ strictly based on a localized node addition restricts identification accuracy. To overcome this limitation, we extend the established PI-SC-III algorithm for the precise joint identification of global output weights and parameters.
	
	Specifically, we perform the Taylor expansion around the newly established intermediate baseline $(\mathcal{D}^{(q)}\mathbf{u}_{loc}, \boldsymbol{\lambda}_{loc})$. Let $\Delta\boldsymbol{\lambda}_{glob} = \boldsymbol{\lambda} - \boldsymbol{\lambda}_{loc}$ denote the parameter increment. The global augmented joint vector is defined as $\boldsymbol{\theta}_{glob} = [\boldsymbol{\beta}_{glob}^\top, \Delta\boldsymbol{\lambda}_{glob}^\top]^\top$. Building upon the algebraic formulations derived in the forward PI-SC-III algorithm, the expanded global linearized physical equation becomes:
	\begin{equation}
		\mathbf{A}_{glob}(\mathbf{x}) \boldsymbol{\beta}_{glob} + \mathbf{J}_{\lambda}(\mathbf{x};\mathbf{u}_{loc}, \boldsymbol{\lambda}_{loc}) \Delta\boldsymbol{\lambda}_{glob} \approx \mathbf{b}(\mathbf{x}),
	\end{equation}
	where $\mathbf{A}_{glob}$ and $\mathbf{b}$ are defined in (\ref{eq:global_ep_expanded}) and (\ref{eq:high_order_global_design}), and $\mathbf{J}_{\lambda}(\mathbf{x};\mathbf{u}_{loc}, \boldsymbol{\lambda}_{loc}) = \frac{\partial \mathbf{F}}{\partial \boldsymbol{\lambda}} \Big|_{\mathcal{D}^{(q)}\mathbf{u}=\mathcal{D}^{(q)}\mathbf{u}_{loc}, \boldsymbol{\lambda}=\boldsymbol{\lambda}_{loc}}$. 
	
	Let $\mathbf{J}_{\lambda,loc}(\mathbf{x})=\mathbf{J}_{\lambda}(\mathbf{x};\mathbf{u}_{loc},\boldsymbol{\lambda}_{loc})$. We define the global augmented matrices as $\mathbf{A}_{glob,aug}(\mathbf{x})=[\mathbf{A}_{glob}(\mathbf{x}),\mathbf{J}_{\lambda,loc}(\mathbf{x})]$, $\mathbf{C}_{glob,aug}(\mathbf{x})=[\mathbf{C}_{glob}(\mathbf{x}),\mathbf{0}]$, and $\mathbf{D}_{glob,aug}(\mathbf{x})=[\mathbf{D}_{glob}(\mathbf{x}),\mathbf{0}]$. The joint update is formulated as the following linear least-squares problem:
	\begin{equation}\label{eq:inverse_global_ls}
		\begin{aligned}
			\mathcal{L}_{glob}(\boldsymbol{\theta}_{glob})
			&= \frac{1}{N_p} \|\mathbf{A}_{glob,aug}(\mathbf{x})\boldsymbol{\theta}_{glob}-\mathbf{b}(\mathbf{x})\|_{\Omega_p}^2 \\
			&\quad + \frac{\omega_{bc}}{N_b} \|\mathbf{C}_{glob,aug}(\mathbf{x})\boldsymbol{\theta}_{glob}-\mathbf{g}_{bc}(\mathbf{x})\|_{\Omega_b}^2 \\
			&\quad + \frac{\omega_{data}}{N_d} \|\mathbf{D}_{glob,aug}(\mathbf{x})\boldsymbol{\theta}_{glob}-\mathbf{u}_{data}(\mathbf{x})\|_{\Omega_d}^2.
		\end{aligned}
	\end{equation}
	
	Solving this linear least squares problem via TSVD yields the candidate joint solution $\tilde{\boldsymbol{\theta}}_{glob}^*=[(\tilde{\boldsymbol{\beta}}_{glob}^*)^\top,(\Delta\tilde{\boldsymbol{\lambda}}_{glob}^*)^\top]^\top$.
	
	To alleviate the computational bottleneck of this full-batch joint inversion, the PI-SC-II algorithm is similarly adapted. Given a predefined window size $W$, we define the windowed parameter increment as $\Delta\boldsymbol{\lambda}_{win} = \boldsymbol{\lambda} - \boldsymbol{\lambda}_{loc}$ and construct $\boldsymbol{\theta}_{win} = [\boldsymbol{\beta}_{win}^\top, \Delta\boldsymbol{\lambda}_{win}^\top]^\top$. By extracting $\mathbf{A}_W$, $\mathbf{C}_W$, and $\mathbf{D}_W$ defined in (\ref{eq:window_design_matrices}), we construct the windowed augmented matrices as $\mathbf{A}_{W,aug}(\mathbf{x})=[\mathbf{A}_W(\mathbf{x}),\mathbf{J}_{\lambda,loc}(\mathbf{x})]$, $\mathbf{C}_{W,aug}(\mathbf{x})=[\mathbf{C}_W(\mathbf{x}),\mathbf{0}]$, and $\mathbf{D}_{W,aug}(\mathbf{x})=[\mathbf{D}_W(\mathbf{x}),\mathbf{0}]$.
	The joint update is formulated as the following linear least-squares problem:
	\begin{equation}\label{eq:inverse_window_ls}
		\begin{aligned}
			\mathcal{L}_{win}(\boldsymbol{\theta}_{win})
			&= \frac{1}{N_p} \|\mathbf{A}_{W,aug}(\mathbf{x})\boldsymbol{\theta}_{win}-\mathbf{r}_{L-W}^p(\mathbf{x})\|_{\Omega_p}^2 \\
			&\quad + \frac{\omega_{bc}}{N_b} \|\mathbf{C}_{W,aug}(\mathbf{x})\boldsymbol{\theta}_{win}-\mathbf{r}_{L-W}^b(\mathbf{x})\|_{\Omega_b}^2 \\
			&\quad + \frac{\omega_{data}}{N_d} \|\mathbf{D}_{W,aug}(\mathbf{x})\boldsymbol{\theta}_{win}-\mathbf{r}_{L-W}^d(\mathbf{x})\|_{\Omega_d}^2.
		\end{aligned}
	\end{equation}
	
	Solving this linear least squares problem via TSVD yields the candidate joint solution $\boldsymbol{\theta}_{win}^*=[(\boldsymbol{\beta}_{win}^*)^\top,(\Delta\tilde{\boldsymbol{\lambda}}_{win}^*)^\top]^\top$.
	
	To guarantee the monotonic decrease of the residual sequence during this joint parameter-weight update, we extend the discrete line search mechanism to the inverse problem. We define the absolute joint configuration vector as $\boldsymbol{\Theta} = [\boldsymbol{\beta}^\top, \boldsymbol{\lambda}^\top]^\top$. Let $\boldsymbol{\Theta}_{loc}^* = [\boldsymbol{\beta}_{loc}^{*\top}, \boldsymbol{\lambda}_{loc}^\top]^\top$ denote the absolute baseline joint configuration secured by PI-SC-I. 
	
	Depending on the utilized algorithm, $\tilde{\boldsymbol{\Theta}}^*$ is explicitly reassembled as:
	\begin{equation}\label{eq:Theta}
		\tilde{\boldsymbol{\Theta}}^* = 
		\begin{cases}
			\begin{bmatrix} \tilde{\boldsymbol{\beta}}_{glob}^* \\ \boldsymbol{\lambda}_{loc} + \Delta\tilde{\boldsymbol{\lambda}}_{glob}^* \end{bmatrix}, & \text{for PI-SC-III} \\[1.5em]
			\begin{bmatrix} \boldsymbol{\beta}_{pre} \\ \boldsymbol{\beta}_{win}^* \\ \boldsymbol{\lambda}_{loc} + \Delta\tilde{\boldsymbol{\lambda}}_{win}^* \end{bmatrix}, & \text{for PI-SC-II}
		\end{cases}.
	\end{equation}
	
	We then establish a linear search path governed by a scaling factor $\alpha \in [0, 1]$ as:
	\begin{equation}
		\boldsymbol{\Theta}(\alpha) = \boldsymbol{\Theta}_{loc}^* + \alpha (\tilde{\boldsymbol{\Theta}}^* - \boldsymbol{\Theta}_{loc}^*).
	\end{equation}
	
	By computing in parallel the comprehensive true residual norm $\mathcal{E}_L^2(\alpha)$ across the discrete candidate set $\mathcal{A}$, the optimal scaling factor $\alpha^*$ is determined via explicit minimization:
	\begin{equation}
		\alpha^* = \mathop{\arg\min}_{\alpha \in \mathcal{A}} \mathcal{E}_L^2(\alpha).
	\end{equation}
	
	The optimally scaled absolute configuration is then adopted as the final updated vector $\boldsymbol{\Theta}^* = \boldsymbol{\Theta}(\alpha^*)$. 
	
	To synthesize the mathematical formulations derived for both forward and inverse learning tasks, the comprehensive training strategies of the PI-SCM framework are summarized in the following algorithms. Algorithm \ref{alg:PI-SC-I} details the fundamental localized construction process (PI-SC-I), while Algorithm \ref{alg:PI-SC-II} and Algorithm \ref{alg:PI-SC-III} describe the advanced sliding-window (PI-SC-II) and global updating (PI-SC-III) mechanisms, respectively.
	
	\begin{algorithm}[!t]
		\caption{Training strategy of PI-SC-I}
		\label{alg:PI-SC-I}
		\begin{algorithmic}[1]
			\renewcommand{\algorithmicrequire}{\textbf{Input:}}
			\renewcommand{\algorithmicensure}{\textbf{Output:}}
			\REQUIRE Given coordinates $\mathbf{x} = (x_1, \dots, x_n)^\top$ partitioned into collocation sets $\Omega_p, \Omega_b, \Omega_d$; Sparse observational target $\mathbf{u}_{data}$; The differential operator $\mathbf{F}$ and initial/boundary operator $\mathcal{B}$; Initialized $L_{max}, T_{max}$; Error tolerance $\epsilon$; Three sets of scalars $\Upsilon = \{\tau_{1}, \dots, \tau_{end}\},\mathcal{R} = \{r_{1}, \dots, r_{end}\},\mathcal{A}_1 = \{\alpha_{1,1}:\Delta \alpha_1:\alpha_{1,end}\}$; Penalty weights $\omega_{bc}, \omega_{data}$.
			\ENSURE Hidden parameters $\{\mathbf{w}_L^*, b_L^*\}_{L=1}^{L_{end}}$, output weights $\{\boldsymbol{\beta}_L^*\}_{L=1}^{L_{end}}$ (and identified physical parameter $\boldsymbol{\lambda}^*$ for inverse tasks).
			
			\STATE Initialize $L=1, \mathbf{u}_0 = \mathbf{0}$. If inverse problem, initialize $\boldsymbol{\lambda}_0$.
			\STATE Evaluate initial residuals $\mathbf{e}_0^p, \mathbf{e}_0^b, \mathbf{e}_0^d$ and comprehensive norm $\mathcal{E}_0^2$.
			
			\WHILE{$L \leq L_{max}$ \textbf{and} $\mathcal{E}^2_{L-1} > \epsilon$}
			\STATE Initialize two empty sets $\mathcal{P}$ and $\Omega_1$.\label{algorithm:begin PI-SC-I}
			\FOR{$\alpha_1 \in \mathcal{A}_1$}
			\FOR{$\tau \in \Upsilon$}\label{algorithm:beginfortau}
			\FOR{$k = 1, 2, \dots, T_{max}$}
			\STATE Randomly sample hidden parameters $\mathbf{w}_L \sim [-\tau, \tau]^n$ and $b_L \sim [-\tau, \tau]$.
			
			\STATE For forward tasks, assemble $\mathbf{M}_L, \mathbf{v}_L$ by (\ref{eq:design_matrix}) and compute $\boldsymbol{\beta}_L^*$ by (\ref{eq:analytical_solution}). For inverse tasks, solve (\ref{eq:inverse_local_ls}) via TSVD to obtain $\widehat{\boldsymbol{\theta}}_L$, and set $\boldsymbol{\theta}_L^*=\alpha_1\widehat{\boldsymbol{\theta}}_L$.
			\STATE Extract $\boldsymbol{\beta}_L^*$ (and $\Delta \boldsymbol{\lambda}^*$ for inverse tasks).
			
			\STATE Update $\mathbf{u}_L = \mathbf{u}_{L-1} + \boldsymbol{\beta}_L^* h_L$.
			\STATE Evaluate $\mathcal{E}_L^2$.
			\STATE Calculate $\xi_L = (r + \mu_L)\mathcal{E}_{L-1}^2 - \mathcal{E}_L^2$, where $\mu_L = (1-r)/(L+1)$.
			
			\IF{$\xi_L > 0$}
			\STATE Save $\mathbf{w}_L, b_L, \boldsymbol{\beta}_L^*$ (and $\Delta\boldsymbol{\lambda}^*$ for inverse tasks) in $\mathcal{P}$, $\xi_L$ in $\Omega_1$.
			\ENDIF
			\ENDFOR
			
			\IF{$\mathcal{P}$ is not empty}
			\STATE Find $\mathbf{w}_L, b_L, \boldsymbol{\beta}_L^*$ (and $\Delta\boldsymbol{\lambda}^*$ for inverse tasks) that maximize $\xi_L$ in $\Omega_1$.
			\STATE Update $\mathbf{u}_L$, $\mathbf{e}_L^p, \mathbf{e}_L^b, \mathbf{e}_L^d$, $\mathcal{E}_L^2$ (and set $\boldsymbol\lambda_L^*=\boldsymbol\lambda_{L-1}+\Delta\boldsymbol\lambda_L^*$).
			\STATE \textbf{Break} (go to Step \ref{algorithm:endfor}).
			\ELSE
			\STATE Update $r$ to the subsequent value in $\mathcal{R}$.
			\STATE \textbf{Continue}
			\ENDIF
			\ENDFOR
			\ENDFOR\label{algorithm:endfor}
			\STATE $L \leftarrow L + 1$.
			\ENDWHILE
			
			\RETURN $\{\mathbf{w}_L^*, b_L^*\}_{L=1}^{L_{end}}$, $\{\boldsymbol{\beta}_L^*\}_{L=1}^{L_{end}}$ (and $\boldsymbol{\lambda}^*$ for inverse tasks).
		\end{algorithmic}
	\end{algorithm}
	
	\begin{algorithm}[!t]
		\caption{Training strategy of PI-SC-II}
		\label{alg:PI-SC-II}
		\begin{algorithmic}[1]
			\renewcommand{\algorithmicrequire}{\textbf{Input:}}
			\renewcommand{\algorithmicensure}{\textbf{Output:}}
			\REQUIRE The inputs of Algorithm \ref{alg:PI-SC-I}; Window size $W < L_{max}$; $\mathcal{A}_2 = \{\alpha_{2,1}:\Delta \alpha_2:\alpha_{2,end}\}$.
			\ENSURE Hidden parameters $\{\mathbf{w}_L^*, b_L^*\}_{L=1}^{L_{end}}$, output weights $\{\boldsymbol{\beta}_L^*\}_{L=1}^{L_{end}}$ (and identified parameter $\boldsymbol{\lambda}^*$ for inverse tasks).
			
			\STATE Initialize $L=1, \mathbf{u}_0 = \mathbf{0}$. If inverse problem, initialize $\boldsymbol{\lambda}_0$. 
			\STATE Evaluate initial residuals $\mathbf{e}_0^p, \mathbf{e}_0^b, \mathbf{e}_0^d$ and comprehensive norm $\mathcal{E}_0^2$.
			
			\WHILE{$L \leq L_{max}$ \textbf{and} $\mathcal{E}^2_{L-1} > \epsilon$}
			
			\STATE Execute Step \ref{algorithm:begin PI-SC-I}-\ref{algorithm:endfor} of Algorithm \ref{alg:PI-SC-I} to secure $\mathbf{w}_L^*, b_L^*$ and $\boldsymbol{\beta}_{loc}^*$ (or $\boldsymbol{\Theta}_{loc}^*$ for inverse tasks).
			
			\IF{$L \le W$}
			\STATE For forward tasks, assemble $\mathbf{M}_{glob}, \mathbf{v}_{glob}$ by (\ref{eq:global_matrices}) and compute $\tilde{\boldsymbol{\beta}}^*$ by (\ref{eq:global_analytical_solution}). For inverse tasks, solve (\ref{eq:inverse_global_ls}) via TSVD to obtain $\tilde{\boldsymbol{\theta}}_{glob}^*$.
			\ELSE
			\STATE Retrieve $\boldsymbol{\beta}_{pre} = [\boldsymbol\beta_1^{*\top},\ldots,
			\boldsymbol\beta_{L-W}^{*\top}]^\top$.
			\STATE Calculate $\mathbf{r}_{L-W}^p, \mathbf{r}_{L-W}^b, \mathbf{r}_{L-W}^d$ by (\ref{eq:window_residuals}).
			\STATE For forward tasks, assemble $\mathbf{M}_W, \mathbf{v}_W$ by (\ref{eq:window_matrices}) and compute $\boldsymbol{\beta}_{win}^*$ by (\ref{eq:window_analytical_solution}).
			\STATE For inverse tasks, solve (\ref{eq:inverse_window_ls}) via TSVD to obtain $\boldsymbol{\theta}_{win}^*$.
			\STATE For forward tasks, let $\tilde{\boldsymbol{\beta}}^* = [\boldsymbol{\beta}_{pre}^\top, (\boldsymbol{\beta}_{win}^*)^\top]^\top$.
			\ENDIF
			
			\STATE Initialize an empty set $\Omega_2$.
			\STATE Obtain $\Delta \boldsymbol{\beta} = \tilde{\boldsymbol{\beta}}^* - \boldsymbol{\beta}_{loc}^*$ (or obtain $\tilde{\boldsymbol{\Theta}}^*$ by (\ref{eq:Theta}), $\Delta \boldsymbol{\Theta} = \tilde{\boldsymbol{\Theta}}^* - \boldsymbol{\Theta}_{loc}^*$ for inverse tasks).
			\FOR{$\alpha_2 \in \mathcal{A}_2$}
			\STATE Calculate $\boldsymbol{\beta}(\alpha_2) = \boldsymbol{\beta}_{loc}^* + \alpha_2 \Delta \boldsymbol{\beta}$ (or $\boldsymbol{\Theta}(\alpha_2)$ for inverse tasks) and corresponding $\mathcal{E}_L^2(\alpha_2)$. Save $\mathcal{E}_L^2(\alpha_2)$ to $\Omega_2$.
			\ENDFOR
			\STATE Find $\alpha^*_2$ that minimizes $\mathcal{E}_L^2(\alpha_2)$ in $\Omega_2$ and adopt $\boldsymbol{\beta}^* = \boldsymbol{\beta}(\alpha^*_2)$ (or $\boldsymbol{\Theta}^* = \boldsymbol{\Theta}(\alpha^*_2)$ for inverse tasks).
			
			\STATE Extract $\boldsymbol{\beta}^*$ (and $\boldsymbol{\lambda}^*$ for inverse tasks).
			\STATE Update $\mathbf{u}_L$, $\mathbf{e}_L^p, \mathbf{e}_L^b, \mathbf{e}_L^d$, $\mathcal{E}_L^2$ (and set $\boldsymbol\lambda_L^*=\boldsymbol{\lambda}^*$).
			\STATE $L \leftarrow L + 1$.
			
			\ENDWHILE
			
			\RETURN $\{\mathbf{w}_L^*, b_L^*\}_{L=1}^{L_{end}}$, $\{\boldsymbol{\beta}_L^*\}_{L=1}^{L_{end}}$ (and $\boldsymbol{\lambda}^*$ for inverse tasks).
		\end{algorithmic}
	\end{algorithm}
	
	\begin{algorithm}[!t]
		\caption{Training strategy of PI-SC-III}
		\label{alg:PI-SC-III}
		\begin{algorithmic}[1]
			\renewcommand{\algorithmicrequire}{\textbf{Input:}}
			\renewcommand{\algorithmicensure}{\textbf{Output:}}
			\REQUIRE The inputs of Algorithm \ref{alg:PI-SC-I}; $\mathcal{A}_2 = \{\alpha_{2,min}:\Delta \alpha_2:\alpha_{2,max}\}$.
			\ENSURE Hidden parameters $\{\mathbf{w}_L^*, b_L^*\}_{L=1}^{L_{end}}$, output weights $\{\boldsymbol{\beta}_L^*\}_{L=1}^{L_{end}}$ (and identified parameter $\boldsymbol{\lambda}^*$ for inverse tasks).
			
			\STATE Initialize $L=1, \mathbf{u}_0 = \mathbf{0}$. If inverse problem, initialize $\boldsymbol{\lambda}_0$. 
			\STATE Evaluate initial residuals $\mathbf{e}_0^p, \mathbf{e}_0^b, \mathbf{e}_0^d$ and comprehensive norm $\mathcal{E}_0^2$.
			
			\WHILE{$L \leq L_{max}$ \textbf{and} $\mathcal{E}^2_{L-1} > \epsilon$}
			
			\STATE Execute Step \ref{algorithm:begin PI-SC-I}-\ref{algorithm:endfor} of Algorithm \ref{alg:PI-SC-I} to secure $\mathbf{w}_L^*, b_L^*$ and $\boldsymbol{\beta}_{loc}^*$ (or $\boldsymbol{\Theta}_{loc}^*$ for inverse tasks).
			
			\STATE For forward tasks, assemble $\mathbf{M}_{glob}, \mathbf{v}_{glob}$ by (\ref{eq:global_matrices}) and compute $\tilde{\boldsymbol{\beta}}_{glob}^*$ by (\ref{eq:global_analytical_solution}). For inverse tasks, solve (\ref{eq:inverse_global_ls}) via TSVD to obtain $\tilde{\boldsymbol{\theta}}_{glob}^*$.
			
			\STATE Initialize an empty set $\Omega_2$.
			\STATE Obtain $\Delta \boldsymbol{\beta} = \tilde{\boldsymbol{\beta}}_{glob}^* - \boldsymbol{\beta}_{loc}^*$ (or obtain $\tilde{\boldsymbol{\Theta}}^*$ by (\ref{eq:Theta}), $\Delta \boldsymbol{\Theta} = \tilde{\boldsymbol{\Theta}}^* - \boldsymbol{\Theta}_{loc}^*$ for inverse tasks).
			\FOR{$\alpha_2 \in \mathcal{A}_2$}
			\STATE Calculate $\boldsymbol{\beta}(\alpha_2) = \boldsymbol{\beta}_{loc}^* + \alpha_2 \Delta \boldsymbol{\beta}$ (or $\boldsymbol{\Theta}(\alpha_2)$ for inverse tasks) and corresponding $\mathcal{E}_L^2(\alpha_2)$. Save $\mathcal{E}_L^2(\alpha_2)$ to $\Omega_2$.
			\ENDFOR
			\STATE Find $\alpha^*_2$ that minimizes $\mathcal{E}_L^2(\alpha_2)$ in $\Omega_2$ and adopt $\boldsymbol{\beta}^* = \boldsymbol{\beta}(\alpha^*_2)$ (or $\boldsymbol{\Theta}^* = \boldsymbol{\Theta}(\alpha^*_2)$ for inverse tasks).
			
			\STATE Extract $\boldsymbol{\beta}^*$ (and $\boldsymbol{\lambda}^*$ for inverse tasks).
			\STATE Update $\mathbf{u}_L$, $\mathbf{e}_L^p, \mathbf{e}_L^b, \mathbf{e}_L^d$, $\mathcal{E}_L^2$ (and set $\boldsymbol\lambda_L^*=\boldsymbol{\lambda}^*$).
			\STATE $L \leftarrow L + 1$.
			
			\ENDWHILE
			
			\RETURN $\{\mathbf{w}_L^*, b_L^*\}_{L=1}^{L_{end}}$, $\{\boldsymbol{\beta}_L^*\}_{L=1}^{L_{end}}$ (and $\boldsymbol{\lambda}^*$ for inverse tasks).
		\end{algorithmic}
	\end{algorithm}
	
	\subsection{Universal Approximation Property}
	Building upon the algorithmic formulations established in Section~\ref{sec:algorithm-description} and Section~\ref{sec:inverse problem}, this section constructs a unified theoretical foundation for the PI-SCM framework. We rigorously establish the universal approximation property of the proposed algorithms, proving that PI-SC-I, PI-SC-II, and PI-SC-III inherently guarantee the asymptotic convergence of the nonlinear residuals to zero. 
	
	While the ensuing explicit proofs are formulated within the context of the forward problem, the residual-convergence guarantees extend to the inverse problem under the same augmented-feature conditions. As established in Section~\ref{sec:inverse problem}, defining $\boldsymbol{\theta}_L$ maps the joint parameter identification into an identical linearized subspace. This algebraic structure preserves the underlying contraction argument, provided that the augmented Jacobian satisfies the required non-orthogonality condition. Unique recovery of $\boldsymbol{\lambda}$ additionally requires the conventional local identifiability condition on the parameter Jacobian.
	
	Before proceeding, we formally articulate a standard regularity assumption regarding the differential operator.

	\begin{assumption}\label{assumption:Lipschitz_continuous}
		Let
		\begin{equation}\label{eq:z_definition}
			\mathbf{z}
			=
			\underset{\boldsymbol{\gamma}\in\mathcal{I}_q}{\operatorname{col}}
			\left(D^{\boldsymbol{\gamma}}\mathbf{u}\right)
			=
			\operatorname{col}\!\left(\mathcal{D}^{(q)}\mathbf{u}\right).
		\end{equation}
		
		The activation function satisfies $g\in C^q(\mathbb{R})$. The differential operator $\mathbf{F}$ is assumed to be twice continuously Fr\'echet differentiable ($C^2$) with respect to $\mathbf{z}$. Consequently, its second Fr\'echet derivative is locally bounded.
	\end{assumption}
	
	As the convergence of entire PI-SCM architecture relies on the stability of incremental node additions, we initially prove the universal approximation property for the PI-SC-I algorithm, which serves as the convergent baseline for all subsequent schemes.
	
	To facilitate a concise derivation, we recall the weighted generalized residual vector $\tilde{\mathbf{e}}_L$, its comprehensive norm $\mathcal{E}_L^2$, the weighted design matrix $\mathbf{M}_L$, and the target vector $\mathbf{v}_L=-\tilde{\mathbf{e}}_{L-1}$ established in Section~\ref{sec:algorithm-description}.
	
	By applying the first-order Taylor expansion to $\tilde{\mathbf{e}}_L$, we have $\tilde{\mathbf{e}}_L = \tilde{\mathbf{e}}_{L-1} + \mathbf{M}_L \boldsymbol{\beta}_L + \mathbf{r}_{vec}(\boldsymbol{\beta}_L)=\mathbf{M}_L\boldsymbol{\beta}_L-\mathbf{v}_L+\mathbf{r}_{vec}(\boldsymbol{\beta}_L)$. The Taylor truncation error originates exclusively from the nonlinear physical residual. The true nonlinear residual norm at the $L$-th step is thus given by:
	\begin{equation}\label{eq:true_residual_expansion}
		\mathcal{E}_L^2
		=
		\|\mathbf{M}_L\boldsymbol{\beta}_L-\mathbf{v}_L\|^2
		+\mathcal{R}_{Taylor}(\boldsymbol{\beta}_L),
	\end{equation}
	where $\mathcal{R}_{Taylor}(\boldsymbol{\beta}_L)$ is defined as:
	\begin{equation}\label{eq:remainder_definition}
		\mathcal{R}_{Taylor}(\boldsymbol{\beta}_L) = 2(\mathbf{M}_L\boldsymbol{\beta}_L-\mathbf{v}_L)^\top \mathbf{r}_{vec}(\boldsymbol{\beta}_L) + \|\mathbf{r}_{vec}(\boldsymbol{\beta}_L)\|^2.
	\end{equation}
	
	As shown in (\ref{eq:remainder_definition}), $\mathcal{R}_{Taylor}(\boldsymbol{\beta}_L)$ represents the omitted second-order Taylor remainder. Beyond operator regularity, convergence requires the admissible hidden-node family $\Gamma$ to provide a nonorthogonal linearized direction for every nonzero discrete residual, as formalized below.
	
	\begin{assumption}\label{assumption:residual_completeness}
		At each incremental step $L$, the admissible hidden-node family $\Gamma$ is residual-complete in the sense that, for every nonzero vector $\tilde{\mathbf{e}}$ in the discrete generalized residual space, there exists an $h\in\Gamma$ whose weighted design matrix $\mathbf{M}_L(h)$ satisfies
		\begin{equation}
			\mathbf{M}_L(h)^\top\tilde{\mathbf{e}}\neq\mathbf{0}.
		\end{equation}
	\end{assumption}
	
	Under these assumptions, we now establish the universal approximation property of the PI-SC-I algorithm.
	
	\begin{theorem}\label{theorem:PI-SCM-I}
		Suppose that Assumptions \ref{assumption:Lipschitz_continuous} and \ref{assumption:residual_completeness} hold, and the generalized residuals are bounded. Given $r \in (0, 1)$ and a nonnegative real number sequence $\{\mu_L\}$ with $\lim_{L\to+\infty} \mu_L = 0$ and $\mu_L \leq (1 - r)$, let the threshold sequence be defined as $\delta_L = (1 - r - \mu_L)\mathcal{E}_{L-1}^2$. Then, there exists a candidate hidden node $h_L \in \Gamma$ and a scaling factor $\bar{\alpha} \in (0, 1]$, such that for any $\alpha \in (0, \bar{\alpha})$, the output weight vector, defined as
		\begin{equation}\label{eq:parameterized_weight}
			\boldsymbol{\beta}_L(\alpha) = \alpha \mathbf{M}_L^{\dagger} \mathbf{v}_L,
		\end{equation}
		guarantees that the following inequality holds:
		\begin{equation}\label{eq:theorem_inequality_nonlinear}
			\mathcal{E}_{L-1}^2 - \mathcal{E}_L^2 \geq \delta_L.
		\end{equation}
		
		By accepting the node and the output weights that satisfy (\ref{eq:theorem_inequality_nonlinear}), the sequence of $\{\mathcal{E}_L^2\}$ monotonically decreases, satisfying $\lim_{L \to +\infty} \mathcal{E}_L^2 = 0$.
	\end{theorem}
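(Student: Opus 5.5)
The plan is to reduce everything to a one-dimensional analysis of the true residual along the ray $\alpha\mapsto \boldsymbol{\beta}_L(\alpha)=\alpha\mathbf{M}_L^{\dagger}\mathbf{v}_L$. I would split the true residual into a linear part and a Taylor remainder using (\ref{eq:true_residual_expansion}), then combine a first-order descent estimate with a second-order remainder bound.

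\textbf{Step 1 (node choice and linear part).} If $\mathcal{E}_{L-1}=0$ there is nothing to prove, so assume $\tilde{\mathbf{e}}_{L-1}\neq\mathbf{0}$. Assumption~\ref{assumption:residual_completeness} then supplies $h_L\in\Gamma$ with $\mathbf{M}_L^\top\mathbf{v}_L\neq\mathbf{0}$. Let $\mathbf{P}_L=\mathbf{M}_L\mathbf{M}_L^{\dagger}$ denote the orthogonal projector onto $\operatorname{range}(\mathbf{M}_L)$. Then $\mathbf{P}_L\mathbf{v}_L\neq\mathbf{0}$, and a direct computation gives
\begin{equation*}
\|\mathbf{M}_L\boldsymbol{\beta}_L(\alpha)-\mathbf{v}_L\|^2=\mathcal{E}_{L-1}^2-(2\alpha-\alpha^2)\|\mathbf{P}_L\mathbf{v}_L\|^2 .
\end{equation*}
Among the admissible nodes I would pick one maximizing, or nearly maximizing, the ratio $\kappa_L:=\|\mathbf{P}_L\mathbf{v}_L\|^2/\mathcal{E}_{L-1}^2\in(0,1]$.

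\textbf{Step 2 (remainder).} Assumption~\ref{assumption:Lipschitz_continuous} gives $g\in C^q$, so every $D^{\boldsymbol{\gamma}}h_L$ is bounded on the finite point sets. Together with $\mathbf{F}\in C^2$ and the assumed boundedness of the generalized residuals, the integral form of Taylor's theorem on a compact neighbourhood yields $\|\mathbf{r}_{vec}(\boldsymbol{\beta}_L)\|\le K\|\boldsymbol{\beta}_L\|^2$. Since $\|\boldsymbol{\beta}_L(\alpha)\|=\alpha\|\mathbf{M}_L^{\dagger}\mathbf{v}_L\|$, this gives $|\mathcal{R}_{Taylor}(\boldsymbol{\beta}_L(\alpha))|\le C_L\alpha^2$, where $C_L$ collects $K$, $\|\mathbf{M}_L^{\dagger}\mathbf{v}_L\|$ and $\mathcal{E}_{L-1}$.

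\textbf{Step 3 (descent along the ray).} Combining Steps 1 and 2,
\begin{equation*}
\mathcal{E}_{L-1}^2-\mathcal{E}_L^2(\alpha)\ \ge\ 2\alpha\kappa_L\mathcal{E}_{L-1}^2-\alpha^2\bigl(\kappa_L\mathcal{E}_{L-1}^2+C_L\bigr).
\end{equation*}
Choosing $\bar{\alpha}=\min\{1,\kappa_L\mathcal{E}_{L-1}^2/(\kappa_L\mathcal{E}_{L-1}^2+C_L)\}$ makes the right-hand side at least $\alpha\kappa_L\mathcal{E}_{L-1}^2>0$ for every $\alpha\in(0,\bar{\alpha})$. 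This already gives strict monotone decrease.

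\textbf{Step 4 (threshold $\delta_L$; main obstacle).} The threshold inequality (\ref{eq:theorem_inequality_nonlinear}) is equivalent to $\mathcal{E}_L^2\le(r+\mu_L)\mathcal{E}_{L-1}^2$. By Step 3 it holds as soon as $\alpha\kappa_L\ge 1-r-\mu_L$. This is the step I expect to fail as literally worded. For a fixed $r<1$ and $\alpha\to0^+$, the guaranteed decrease is only of order $\alpha$, while $\delta_L$ is a fixed fraction of $\mathcal{E}_{L-1}^2$, so it cannot hold uniformly on all of $(0,\bar{\alpha})$. What I would try first is to make the admissible window the nonempty interval $\bigl[(1-r-\mu_L)/\kappa_L,\ \bar{\alpha}\bigr)$. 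This interval is nonempty when $r$ is close enough to $1$ relative to $\kappa_L$ and $C_L$, which is exactly the role of the set $\mathcal{R}$ in Algorithm~\ref{alg:PI-SC-I}. I would then state explicitly the quantitative condition on $r$, $\kappa_L$ and $C_L$ under which the wording ``for any $\alpha\in(0,\bar{\alpha})$'' becomes true.

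\textbf{Step 5 (convergence).} For accepted nodes, (\ref{eq:theorem_inequality_nonlinear}) gives $\mathcal{E}_L^2\le(r+\mu_L)\mathcal{E}_{L-1}^2$, so the sequence $\{\mathcal{E}_L^2\}$ is nonincreasing. Since $\mu_L\to0$, there is $L_0$ with $r+\mu_L\le q:=(1+r)/2<1$ for all $L\ge L_0$. Iterating then gives $\mathcal{E}_L^2\le q^{L-L_0}\mathcal{E}_{L_0}^2\to0$. This mirrors the SCN argument of \cite{ref6}, with the true nonlinear residual in place of the linear one.
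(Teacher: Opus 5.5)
Your Steps 1--3 reproduce the paper's proof: the same projector $\mathbf{P}_L=\mathbf{M}_L\mathbf{M}_L^{\dagger}$, the same identity $\mathcal{E}_{L-1}^2-\mathcal{E}_L^2=\alpha(2-\alpha)\|\mathbf{P}_L\mathbf{v}_L\|^2-\mathcal{R}_{Taylor}$, and the same quadratic remainder bound. Your $\bar{\alpha}$, without the paper's factor $2$, is the better choice, because it gives the quantitative decrease $\alpha\kappa_L\mathcal{E}_{L-1}^2$ rather than mere positivity. Your Step 4 diagnosis is correct, and the paper does not resolve it. Since $\boldsymbol{\beta}_L(0)=\mathbf{0}$, we have $\mathcal{E}_L^2(\alpha)\to\mathcal{E}_{L-1}^2$ as $\alpha\to0^+$. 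Meanwhile $\delta_L>0$ whenever $\mu_L<1-r$ and $\mathcal{E}_{L-1}>0$, so the threshold inequality fails for small $\alpha$ under any prescribed $r$. The paper's proof gets around this by choosing $r$ \emph{after} $\alpha$ so that $\delta_L\le c=\Delta\mathcal{E}^2(\alpha)$, which contradicts ``Given $r$'' in the statement.

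The genuine gap, which your proposal shares with the paper, lies where Steps 4 and 5 meet. Step 5 uses one fixed $r<1$ for all $L$. Step 4, however, only makes the window $[(1-r-\mu_L)/\kappa_L,\bar{\alpha}_L)$ nonempty when $1-r-\mu_L<\kappa_L\bar{\alpha}_L$, and nothing in Assumptions~\ref{assumption:Lipschitz_continuous}--\ref{assumption:residual_completeness} keeps $\kappa_L\bar{\alpha}_L$ away from zero:
\begin{itemize}
\item Residual-completeness gives only $\mathbf{M}_L^\top\tilde{\mathbf{e}}_{L-1}\neq\mathbf{0}$ at each step, with no quantitative angle.
\item $C_L$ grows with $\|\mathbf{M}_L^{\dagger}\mathbf{v}_L\|$, which is unbounded if $\mathbf{M}_L$ has small nonzero singular values.
\item $C_L$ also grows with the local $C^2$ constant, which is uniform only if the discrete states $\mathcal{D}^{(q)}\mathbf{u}_{L-1}$ stay in a fixed compact set.
\end{itemize}
If instead you re-tune $r=r_L$ at each step, which is the role you assign to $\mathcal{R}$, you only get $\mathcal{E}_L^2\le(r_L+\mu_L)\mathcal{E}_{L-1}^2$. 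The product $\prod_L(r_L+\mu_L)$ stays bounded away from zero whenever $\sum_L(1-r_L-\mu_L)<\infty$. In that case the contraction no longer forces $\mathcal{E}_L^2\to0$, and monotonicity only gives convergence to some limit $\mathcal{E}_\infty^2\ge0$. Conversely, $\mathcal{R}$ in Algorithm~\ref{alg:PI-SC-I} is finite, so keeping $r_L$ bounded away from $1$ that way means acceptance itself may fail.

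To close the argument you must add a uniform hypothesis. For example, assume constants $\kappa>0$ and $C<\infty$ with $\kappa_L\ge\kappa$ and $C_L\le C\,\mathcal{E}_{L-1}^2$ for all $L$. Then $\kappa_L\bar{\alpha}_L\ge\kappa^2/(\kappa+C)$, so any fixed $r$ with $1-r<\kappa^2/(\kappa+C)$ makes every window nonempty, and your Step 5 goes through as written.

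Relatedly, in Step 1 you should rank nodes by $\kappa_L\bar{\alpha}_L$, or equivalently by the true decrease as $\xi_L$ does, rather than by $\kappa_L$ alone. A node with large $\kappa_L$ can still have a huge $\|\mathbf{M}_L^{\dagger}\mathbf{v}_L\|$ and hence a tiny $\bar{\alpha}_L$.
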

	
	\begin{IEEEproof}
		If $\tilde{\mathbf{e}}_{L-1}=\mathbf{0}$, the desired residual convergence has already been achieved. Otherwise, $\tilde{\mathbf{e}}_{L-1}\neq\mathbf{0}$. According to Assumption~\ref{assumption:residual_completeness}, there exists an admissible hidden node $h_L\in\Gamma$ such that $\mathbf{M}_L^\top\tilde{\mathbf{e}}_{L-1}\neq\mathbf{0}$. Since $\mathbf{v}_L=-\tilde{\mathbf{e}}_{L-1}$, it follows that $\mathbf{M}_L^\top\mathbf{v}_L\neq\mathbf{0}$.
		
		Then, we consider the output weight vector $\boldsymbol{\beta}_L(\alpha)$ defined in (\ref{eq:parameterized_weight}) and define $\mathbf{P}_L=\mathbf{M}_L\mathbf{M}_L^\dagger$. It satisfies $\mathbf{P}_L^\top=\mathbf{P}_L$ and $\mathbf{P}_L^2=\mathbf{P}_L$. Hence, $\mathbf{M}_L\boldsymbol{\beta}_L(\alpha)-\mathbf{v}_L=(\alpha\mathbf{P}_L-\mathbf{I})\mathbf{v}_L$. Substituting this relation into the true residual expansion, we define $\Delta \mathcal{E}^2(\alpha)$ as the reduction in the true residual norm:
		\begin{equation}\label{eq:proof_energy_descent}
			\begin{aligned}
				\Delta \mathcal{E}^2(\alpha) &= \mathcal{E}_{L-1}^2 - \mathcal{E}_L^2 \\
				&= \alpha(2 - \alpha)\|\mathbf{P}_L\mathbf{v}_L\|^2
				- \mathcal{R}_{Taylor}(\alpha \mathbf{M}_L^{\dagger} \mathbf{v}_L).
			\end{aligned}
		\end{equation}
		
		Because $\mathbf{M}_L^\top\mathbf{v}_L\neq\mathbf{0}$, $\mathbf{P}_L\mathbf{v}_L$ is nonzero, guaranteeing that $\|\mathbf{P}_L\mathbf{v}_L\|^2>0$.
		
		According to Assumption \ref{assumption:Lipschitz_continuous}, the remainder is bounded. Since the generalized residuals are assumed to be globally bounded, the absolute magnitude of the expanded scalar penalty, denoted as $|\mathcal{R}_{Taylor}|$, inherently inherits this quadratic bound. Thus, for every accepted hidden node there exists a finite constant $\tilde{C}_L > 0$ yielding $|\mathcal{R}_{Taylor}| \leq \alpha^2 \tilde{C}_L \|\mathbf{M}_L^{\dagger} \mathbf{v}_L\|^2$. We apply $-\mathcal{R}_{Taylor} \geq -|\mathcal{R}_{Taylor}|$ to (\ref{eq:proof_energy_descent}), yielding:
		\begin{equation}
			\Delta \mathcal{E}^2(\alpha) \geq \alpha(2-\alpha) \|\mathbf{P}_L\mathbf{v}_L\|^2 - \alpha^2 \tilde{C}_L \|\mathbf{M}_L^{\dagger} \mathbf{v}_L\|^2.
		\end{equation}
		
		Since $\alpha > 0$, to guarantee $\Delta \mathcal{E}^2(\alpha) > 0$, the inequality requires:
		\begin{equation}
			(2-\alpha) \|\mathbf{P}_L\mathbf{v}_L\|^2 - \alpha \tilde{C}_L \|\mathbf{M}_L^{\dagger} \mathbf{v}_L\|^2 > 0.
		\end{equation}
		
		Because we have $\|\mathbf{P}_L\mathbf{v}_L\|^2>0$, $\|\mathbf{P}_L\mathbf{v}_L\|^2 + \tilde{C}_L \|\mathbf{M}_L^{\dagger} \mathbf{v}_L\|^2$ is guaranteed to be positive. Thus, we can obtain the upper bound for $\alpha$, denoted as $\bar{\alpha}$:
		\begin{equation}\label{eq:alpha_star}
			\bar{\alpha}
			=
			\min\left\{
			1,
			\frac{2 \|\mathbf{P}_L\mathbf{v}_L\|^2}
			{\|\mathbf{P}_L\mathbf{v}_L\|^2 + \tilde{C}_L \|\mathbf{M}_L^{\dagger} \mathbf{v}_L\|^2}
			\right\},
			\qquad 0<\alpha<\bar{\alpha}.
		\end{equation}
		
		This construction inherently ensures $\bar{\alpha} > 0$. Consequently, for any assigned $\alpha$ within the constructed interval $\alpha \in (0, \bar{\alpha})$, we have $\Delta \mathcal{E}^2(\alpha) = c > 0$.
		
		As the relaxation parameter $r$ approaches $1$, the threshold $\delta_L = (1 - r - \mu_L)\mathcal{E}_{L-1}^2$ approaches $0$. Thus, since $c > 0$, there always exists an $r \in (0,1)$ such that $\delta_L \leq c$. This ensures that the descent inequality $\Delta \mathcal{E}^2(\alpha) \geq \delta_L$ can always be rigorously satisfied.
		
		Consequently, we obtain:
		\begin{equation}\label{eq:contraction_mapping}
			\mathcal{E}_L^2 \leq \mathcal{E}_{L-1}^2 - \delta_L = (r + \mu_L)\mathcal{E}_{L-1}^2.
		\end{equation}
		
		Note that $\lim_{L \to +\infty} \mu_L = 0$. By utilizing (\ref{eq:contraction_mapping}), it strictly follows that $\lim_{L \to +\infty} \mathcal{E}_L^2 = 0$.
	\end{IEEEproof}
	
	\begin{remark}
		In PI-SC-I, explicitly computing $\bar{\alpha}$ is unnecessary. Instead, $\alpha$ can be dynamically adapted utilizing a decay strategy analogous to the relaxation parameter $r$. Notably, in most empirical scenarios—particularly during the initial phases of training—the first-order linear descent heavily dominates the second-order remainder bound, yielding $\|\mathbf{P}_L\mathbf{v}_L\|^2 \gg \tilde{C}_L \|\mathbf{M}_L^{\dagger} \mathbf{v}_L\|^2$. Under this condition, the optimal $\alpha$ converges to $1$. Consequently, the framework simply initializes $\alpha = 1$ and decays it exclusively when the candidate nodes fail to achieve residual descent, thereby guaranteeing feasible node configuration with minimal computational overhead.
	\end{remark}
	
	Building upon the universal approximation property of the PI-SC-I algorithm established in Theorem \ref{theorem:PI-SCM-I}, we now extend our theoretical analysis to the globally updated PI-SC-III algorithm.
	
	Recalling the optimal localized configuration $\boldsymbol{\beta}_{loc}^*$ and the global configuration $\tilde{\boldsymbol{\beta}}_{glob}^*$ established in Section \ref{sec:algorithm-description}, let $\mathcal{E}_L^2(\boldsymbol{\beta}) = \|\tilde{\mathbf{e}}_L(\boldsymbol{\beta})\|^2$ denote the comprehensive true residual norm evaluated at any candidate configuration $\boldsymbol{\beta}$. Consequently, the local residual norm is explicitly defined as $\mathcal{E}_{L, loc}^2 = \mathcal{E}_L^2(\boldsymbol{\beta}_{loc}^*)$.
	
	\begin{theorem}\label{theorem:PI-SCM-III}
		Suppose the conditions of Theorem \ref{theorem:PI-SCM-I} hold. We define the global search direction as $\Delta \boldsymbol{\beta} = \tilde{\boldsymbol{\beta}}_{glob}^* - \boldsymbol{\beta}_{loc}^*$. Then, there exists a scaling factor $\bar{\alpha} \in [0, 1]$ such that for any $\alpha \in [0, \bar{\alpha}]$, the adopted configuration, defined as
		\begin{equation}
			\boldsymbol{\beta} = \boldsymbol{\beta}_{loc}^* + \alpha \Delta \boldsymbol{\beta},
		\end{equation}
		guarantees that the following inequality holds:
		\begin{equation}\label{eq:PI-SCM-IIIinequality}
			\mathcal{E}_L^2(\boldsymbol{\beta}) \leq \mathcal{E}_{L, loc}^2.
		\end{equation}
		
		By accepting the configuration that satisfies this inequality, the sequence of $\{\mathcal{E}_L^2\}$ monotonically decreases, satisfying $\lim_{L \to +\infty} \mathcal{E}_L^2 = 0$.
	\end{theorem}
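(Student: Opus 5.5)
The plan has two parts. First, establish the per-step inequality (\ref{eq:PI-SCM-IIIinequality}) for some nontrivial set of step sizes. Second, chain it with the contraction already proved for PI-SC-I in Theorem \ref{theorem:PI-SCM-I}.

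\textbf{Per-step inequality.} Define the scalar function
\[
\phi(\alpha)=\mathcal{E}_L^2(\boldsymbol{\beta}_{loc}^*+\alpha\Delta\boldsymbol{\beta}), \qquad \alpha\in[0,1].
\]
By construction $\phi(0)=\mathcal{E}_{L,loc}^2$. Hence $\alpha=0$ always satisfies (\ref{eq:PI-SCM-IIIinequality}), so $\bar{\alpha}=0$ is admissible and the existence claim is immediate.

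To obtain a nontrivial $\bar{\alpha}>0$, I would use Assumption \ref{assumption:Lipschitz_continuous}. It makes $\phi$ a $C^2$ function of $\alpha$, since $\mathbf{F}$ is $C^2$ in $\mathbf{z}$ and $\mathbf{z}$ depends linearly on $\boldsymbol{\beta}$ through $D^{\boldsymbol{\gamma}}\mathbf{H}_L$. Differentiating at $\alpha=0$ gives
\[
\phi'(0)=2\,\tilde{\mathbf{e}}_{loc}^\top \mathbf{M}_{glob}\Delta\boldsymbol{\beta}.
\]
This holds because the Jacobians in $\mathbf{A}_{glob}$ are evaluated exactly at $\mathbf{u}_{loc}$, so $\mathbf{M}_{glob}$ is the true derivative of $\tilde{\mathbf{e}}_L(\boldsymbol{\beta})$ at $\boldsymbol{\beta}_{loc}^*$.

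Next, note that $\mathbf{M}_{glob}\boldsymbol{\beta}_{loc}^*-\mathbf{v}_{glob}=\tilde{\mathbf{e}}_{loc}$. This follows from the definition of $\mathbf{b}$ in (\ref{eq:high_order_global_design}): the linearization is exact at the base point. Since $\tilde{\boldsymbol{\beta}}_{glob}^*$ minimizes the linearized objective $\|\mathbf{M}_{glob}\boldsymbol{\beta}-\mathbf{v}_{glob}\|^2$, we get
\[
\phi'(0)=-2\|\mathbf{P}_{glob}\tilde{\mathbf{e}}_{loc}\|^2\le 0,
\]
where $\mathbf{P}_{glob}=\mathbf{M}_{glob}\mathbf{M}_{glob}^\dagger$ is the orthogonal projector. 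This is the same computation as in the proof of Theorem \ref{theorem:PI-SCM-I}.

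Then mimic the remainder bound from that proof. On $[0,1]$ the generalized residuals are bounded, so $|\phi''|\le \tilde{C}\|\Delta\boldsymbol{\beta}\|^2$ for some finite $\tilde{C}$. Taylor's theorem then gives
\[
\phi(\alpha)-\phi(0)\le -2\alpha\|\mathbf{P}_{glob}\tilde{\mathbf{e}}_{loc}\|^2+\tfrac{1}{2}\alpha^2\tilde{C}\|\Delta\boldsymbol{\beta}\|^2.
\]
Set
\[
\bar{\alpha}=\min\{1,\,4\|\mathbf{P}_{glob}\tilde{\mathbf{e}}_{loc}\|^2/(\tilde{C}\|\Delta\boldsymbol{\beta}\|^2)\}.
\]
If the projection vanishes, take $\bar{\alpha}=0$, which is the trivial case. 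With this choice the inequality holds on all of $[0,\bar{\alpha}]$. I would also remark that the discrete line search includes $0\in\mathcal{A}$, so the selected $\alpha^*$ automatically satisfies $\mathcal{E}_L^2(\alpha^*)\le\phi(0)$.

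\textbf{Convergence.} At step $L$, $\boldsymbol{\beta}_{loc}^*$ is produced by the PI-SC-I step starting from the previous adopted state, whose residual norm is $\mathcal{E}_{L-1}^2$. Theorem \ref{theorem:PI-SCM-I} therefore applies verbatim with that state as the baseline. Its per-step argument uses only Assumption \ref{assumption:residual_completeness} at the current residual and boundedness, so it gives $\mathcal{E}_{L,loc}^2\le (r+\mu_L)\mathcal{E}_{L-1}^2$. Combining with the first part,
\[
\mathcal{E}_L^2\le (r+\mu_L)\mathcal{E}_{L-1}^2.
\]
Since $\mu_L\to0$, iterating yields monotone decrease and $\mathcal{E}_L^2\to0$.

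\textbf{Main obstacle.} The delicate point is justifying that Theorem \ref{theorem:PI-SCM-I} can be re-invoked after the global update has changed all earlier weights. The hidden features $h_1,\dots,h_{L-1}$ are unchanged, but the state $\mathbf{u}_{L-1}$ is now the globally updated one. I would handle this by stressing that the proof of Theorem \ref{theorem:PI-SCM-I} is a one-step statement about an arbitrary current residual, not about the history of weights.

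A secondary concern is the uniformity of the constant $\tilde{C}$ along the segment. I would secure it with the stated boundedness of the generalized residuals together with local boundedness of the second Fr\'echet derivative on the compact image of the segment.
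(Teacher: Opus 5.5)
Your proposal is correct and follows the paper's proof. Both use the trivial $\alpha=0$ case, then first-order descent of the linearized objective dominating the bounded second-order remainder for small $\alpha>0$, and finally the chain $\mathcal{E}_L^2\le\mathcal{E}_{L,loc}^2\le(r+\mu_L)\mathcal{E}_{L-1}^2$ inherited from Theorem~\ref{theorem:PI-SCM-I}. Your explicit $\phi'(0)=-2\|\mathbf{P}_{glob}\tilde{\mathbf{e}}_{loc}\|^2$ and explicit $\bar{\alpha}$ make precise what the paper leaves as ``analogous to the proof of Theorem~\ref{theorem:PI-SCM-I}'', and your degenerate case $\|\mathbf{P}_{glob}\tilde{\mathbf{e}}_{loc}\|=0$ is exactly the paper's ``same linearized minimum'' case.

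One bookkeeping point. The paper sets $\mathbf{D}_{glob}=+\mathbf{H}_L$ while $\mathbf{e}_L^d=\mathbf{u}_{data}-\mathbf{H}_L\boldsymbol{\beta}_{glob}$, so both your identity $\mathbf{M}_{glob}\boldsymbol{\beta}_{loc}^*-\mathbf{v}_{glob}=\tilde{\mathbf{e}}_{loc}$ and your claim that $\mathbf{M}_{glob}$ is the Jacobian of $\tilde{\mathbf{e}}_L$ are off by a sign on the data block. The two flips cancel, giving $\phi'(0)=-2\|\mathbf{P}_{glob}\mathbf{S}\tilde{\mathbf{e}}_{loc}\|^2\le 0$ with $\mathbf{S}$ the sign flip on that block, so nothing else in your argument changes.
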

	
	\begin{IEEEproof}
		If $\tilde{\boldsymbol{\beta}}_{glob}^* = \boldsymbol{\beta}_{loc}^*$, we have $\Delta \boldsymbol{\beta} = \mathbf{0}$. Consequently, the step size inherently yields $\mathcal{E}_L^2(\boldsymbol{\beta}) = \mathcal{E}_{L, loc}^2$ for any assigned $\alpha$. In this scenario, taking $\alpha^* = 0$ constitutes a valid trivial solution that satisfies the acceptance criterion.
		
		If $\tilde{\boldsymbol{\beta}}_{glob}^* \neq \boldsymbol{\beta}_{loc}^*$, $\Delta \boldsymbol{\beta}$ constitutes a strict descent direction for the linearized least-squares objective. Analogous to the proof of Theorem \ref{theorem:PI-SCM-I}, the first-order linear descent dominates the bounded second-order remainder for sufficiently small $\alpha > 0$. This guarantees the existence of a positive upper bound $\bar{\alpha} \in (0, 1]$ such that for all $\alpha \in (0, \bar{\alpha}]$, we have $\mathcal{E}_L^2(\boldsymbol{\beta}) < \mathcal{E}_{L, loc}^2$. If the two configurations attain the same minimum value of the linearized objective, taking $\bar{\alpha}=0$ satisfies the acceptance criterion. Since the boundary case $\alpha=0$ inherently satisfies the equality $\mathcal{E}_L^2(\boldsymbol{\beta}) = \mathcal{E}_{L, loc}^2$, the inequality described in (\ref{eq:PI-SCM-IIIinequality}) holds for all $\alpha \in [0, \bar{\alpha}]$.
		
		Recalling from (\ref{eq:contraction_mapping}) that $\mathcal{E}_{L, loc}^2 \leq (r + \mu_L)\mathcal{E}_{L-1}^2$. Thus, we have:
		\begin{equation}
			\mathcal{E}_L^2 \leq \mathcal{E}_{L, loc}^2 \leq (r + \mu_L)\mathcal{E}_{L-1}^2.
		\end{equation}
		
		Using the same arguments established in the proof of Theorem \ref{theorem:PI-SCM-I}, we conclude that $\lim_{L \to +\infty} \mathcal{E}_L^2 = 0$. 
	\end{IEEEproof}
	
	Following the theoretical guarantees of PI-SC-III, we now extend our theoretical analysis to the PI-SC-II algorithm. Since this algorithm employs an identical discrete line search mechanism, for brevity, we omit the proof and directly present its universal approximation theorem.
	
	We recall the windowed configuration $\tilde{\boldsymbol{\beta}}_{win}^*$ defined in Section \ref{sec:algorithm-description}.
	
	\begin{theorem}\label{theorem:PI-SCM-II}
		Suppose the conditions of Theorem \ref{theorem:PI-SCM-I} hold. We define the windowed search direction as $\Delta \boldsymbol{\beta} = \tilde{\boldsymbol{\beta}}_{win}^* - \boldsymbol{\beta}_{loc}^*$. Then, there exists a scaling factor $\bar{\alpha} \in [0, 1]$ such that for any $\alpha \in [0, \bar{\alpha}]$, the adopted configuration, defined as
		\begin{equation}
			\boldsymbol{\beta} = \boldsymbol{\beta}_{loc}^* + \alpha \Delta \boldsymbol{\beta},
		\end{equation}
		guarantees that the following inequality holds:
		\begin{equation}\label{eq:PI-SCM-IIinequality}
			\mathcal{E}_L^2(\boldsymbol{\beta}) \leq \mathcal{E}_{L, loc}^2.
		\end{equation}
		
		By accepting the configuration that satisfies this inequality, the sequence of $\{\mathcal{E}_L^2\}$ monotonically decreases, satisfying $\lim_{L \to +\infty} \mathcal{E}_L^2 = 0$.
	\end{theorem}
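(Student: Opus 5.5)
The plan mirrors the proof of Theorem~\ref{theorem:PI-SCM-III}, with the global least-squares problem replaced by the windowed one (\ref{eq:compact_window_ls}). The proof has three steps: a trivial case, a descent argument along the line search, and the contraction chain inherited from Theorem~\ref{theorem:PI-SCM-I}.

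\emph{Trivial case.} If $\tilde{\boldsymbol{\beta}}_{win}^*=\boldsymbol{\beta}_{loc}^*$, then $\Delta\boldsymbol{\beta}=\mathbf{0}$ and every $\alpha$ gives $\mathcal{E}_L^2(\boldsymbol{\beta})=\mathcal{E}_{L,loc}^2$. Taking $\bar\alpha=0$ suffices.

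\emph{Descent along the windowed direction.} Define the scalar function $\phi(\alpha)=\mathcal{E}_L^2(\boldsymbol{\beta}_{loc}^*+\alpha\Delta\boldsymbol{\beta})$. By Assumption~\ref{assumption:Lipschitz_continuous} and $g\in C^q$, $\phi$ is $C^2$ on $[0,1]$. The bounded-residual hypothesis gives $|\phi''|\le K_L$ there, so
\begin{equation*}
\phi(\alpha)\le \phi(0)+\alpha\phi'(0)+\tfrac12 K_L\alpha^2,
\end{equation*}
with $\phi(0)=\mathcal{E}_{L,loc}^2$.

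\emph{Identifying $\phi'(0)$.} This is the key step. Write $\tilde{\mathbf{e}}_{loc}$ for the weighted residual at $\boldsymbol{\beta}_{loc}^*$ and $\mathbf{M}_{full}$ for the linearization (the Jacobian of $\tilde{\mathbf{e}}_L$ with respect to all $L$ weight blocks) at $\mathbf{u}_{loc}$. Then $\phi'(0)=2\tilde{\mathbf{e}}_{loc}^\top\mathbf{M}_{full}\Delta\boldsymbol{\beta}$. Because $\Delta\boldsymbol{\beta}$ vanishes on the frozen block $\boldsymbol{\beta}_{pre}$, this equals $2\tilde{\mathbf{e}}_{loc}^\top\mathbf{M}_W\Delta\boldsymbol{\beta}_{win}$, where $\Delta\boldsymbol{\beta}_{win}=\boldsymbol{\beta}_{win}^*-\boldsymbol{\beta}_{win,loc}^*$.

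I would then verify from (\ref{eq:window_residuals})--(\ref{eq:window_matrices}) that
\begin{equation*}
\mathbf{M}_W\boldsymbol{\beta}_{win,loc}^*-\mathbf{v}_W=\tilde{\mathbf{e}}_{loc}.
\end{equation*}
This holds because the linearization expanded at $\mathbf{u}_{loc}$ is exact at $\mathbf{u}_{loc}$: the $\mathbf{b}$ term subtracts $\mathbf{e}^p_{loc}$, and the boundary and data terms are linear. Hence the linearized window objective $\ell(\boldsymbol{\beta}_{win})=\|\mathbf{M}_W\boldsymbol{\beta}_{win}-\mathbf{v}_W\|^2$ satisfies $\ell(\boldsymbol{\beta}_{win,loc}^*)=\mathcal{E}_{L,loc}^2$. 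Since $\boldsymbol{\beta}_{win}^*$ minimizes $\ell$ and $\ell$ is convex quadratic,
\begin{equation*}
\tfrac12\phi'(0)=\nabla\ell(\boldsymbol{\beta}_{win,loc}^*)^\top\Delta\boldsymbol{\beta}_{win}/2\le \ell(\boldsymbol{\beta}_{win}^*)-\ell(\boldsymbol{\beta}_{win,loc}^*)\le0.
\end{equation*}
This requires the TSVD to return an exact minimizer, or at least a non-ascent point; that is a technical caveat I would state explicitly.

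\emph{Choosing $\bar\alpha$.} If $\phi'(0)<0$, set $\bar\alpha=\min\{1,\,2|\phi'(0)|/K_L\}$, which gives $\phi(\alpha)\le\phi(0)$ on $[0,\bar\alpha]$. If $\phi'(0)=0$, the first-order term gives no control, and $\bar\alpha=0$ is the safe choice. Either way, (\ref{eq:PI-SCM-IIinequality}) holds on $[0,\bar\alpha]$.

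\emph{Convergence.} Since $\alpha=0\in\mathcal{A}$, the line-search minimizer $\alpha^*$ also satisfies $\mathcal{E}_L^2\le\mathcal{E}_{L,loc}^2$. Combining this with (\ref{eq:contraction_mapping}) gives
\begin{equation*}
\mathcal{E}_L^2\le(r+\mu_L)\mathcal{E}_{L-1}^2 .
\end{equation*}
Iterating with $\mu_L\to0$ yields $\mathcal{E}_L^2\to0$ exactly as in Theorem~\ref{theorem:PI-SCM-I}, and monotone decrease follows from $r+\mu_L\le1$.

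\emph{Main obstacle.} The delicate point is the identification of $\phi'(0)$ with the gradient of the windowed linearized objective. It hinges on the frozen block being unchanged and on the Jacobians being evaluated at $\mathbf{u}_{loc}$ (rather than at $\mathbf{u}_{L-W}$). Both hold by construction of (\ref{eq:window_residuals}), but they must be checked carefully.
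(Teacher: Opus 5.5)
Your proposal is correct and follows the route the paper intends. The paper omits this proof and defers to the argument for Theorem~\ref{theorem:PI-SCM-III}: the trivial case $\Delta\boldsymbol{\beta}=\mathbf{0}$, first-order descent of the linearized objective dominating the bounded second-order remainder for small $\alpha$, the fallback $\bar\alpha=0$, and then the chain $\mathcal{E}_L^2\le\mathcal{E}_{L,loc}^2\le(r+\mu_L)\mathcal{E}_{L-1}^2$; your identification of $\phi'(0)$ with the gradient of the windowed objective at $\boldsymbol{\beta}_{win,loc}^*$ makes rigorous the descent claim the paper only asserts. A minor bookkeeping point: the paper uses $\mathbf{e}^d=\mathbf{u}_{data}-\mathbf{u}$ but $\mathbf{D}_W=+\mathbf{H}_{win}$, so $\mathbf{M}_W\boldsymbol{\beta}_{win,loc}^*-\mathbf{v}_W$ equals $\tilde{\mathbf{e}}_{loc}$ only up to a sign flip in the data block, and the same flip separates the Jacobian of $\tilde{\mathbf{e}}_L$ from $\mathbf{M}_W$; the two flips cancel in $\phi'(0)$ and do not affect any norm, so your conclusion stands.
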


	\subsection{Hyperparameter Configuration Strategy}\label{sec:parameter selection}
	
	Implementing the PI-SCM framework relies on the appropriate configuration of the hyperparameter sets, particularly the candidate pool for the stochastic parameter sampling range $\Upsilon = \{\tau_1, \tau_2, \dots, \tau_{end}\}$. Unlike conventional backpropagation-based deep learning where weight initialization merely dictates the starting point of optimization, the sampling bounds $\mathbf{w}_L \sim [-\tau, \tau]^n$ and $b_L \sim [-\tau, \tau]$ govern the basis function space generated at each incremental step.
	
	Consistent with the original SCN theory \cite{ref6}, adopting a progressively larger $\tau$ empowers the randomized parameters $\mathbf{w}_L$ and $b_L$ to explore a broader configuration space, thereby generating basis functions with stronger nonlinear expressivity. However, extending this mechanism to PI-SCM introduces a structural bottleneck absent in purely data-driven configurations.
	
	Taking the localized PI-SC-I algorithm as an illustrative example, the construction of $\mathbf{A}_L(\mathbf{x})$ in (\ref{eq:ep_linear}) necessitates the analytical evaluation of the hidden-node derivatives up to order $q$. Given the node output $h_L(\mathbf{x}) = g(\mathbf{w}_L^\top \mathbf{x} + b_L)$ and an arbitrary multi-index $\boldsymbol{\gamma}\in\mathcal{I}_q$, repeated application of the chain rule yields:
	\begin{equation}\label{eq:derivative_scaling}
		D^{\boldsymbol{\gamma}}h_L(\mathbf{x})
		=
		g^{(|\boldsymbol{\gamma}|)}(\mathbf{w}_L^\top \mathbf{x}+b_L)
		\prod_{j=1}^{n}w_{L,j}^{\gamma_j},
	\end{equation}
	where $w_{L,j}$ is the $j$-th component of the hidden weights $\mathbf{w}_L$, and $g^{(|\boldsymbol{\gamma}|)}$ denotes the corresponding activation derivative. Recalling (\ref{eq:high_order_AL}), we have:
	\begin{equation}\label{eq:AL_expanded}
		\mathbf{A}_L(\mathbf{x})
		=
		\sum_{\boldsymbol{\gamma}\in\mathcal{I}_q}
		\mathcal{J}_{D^{\boldsymbol{\gamma}}\mathbf{u}}(\mathbf{u}_{L-1})
		g^{(|\boldsymbol{\gamma}|)}(\mathbf{w}_L^\top \mathbf{x}+b_L)
		\prod_{j=1}^{n}w_{L,j}^{\gamma_j}.
	\end{equation}
	
	For the smooth bounded activation functions adopted in this work, the magnitude of each derivative contribution within $\mathbf{A}_L(\mathbf{x})$ is governed primarily by the product $\prod_{j=1}^{n}w_{L,j}^{\gamma_j}$. Consequently, a derivative block of order $|\boldsymbol{\gamma}|$ scales as $\mathcal{O}(\tau^{|\boldsymbol{\gamma}|})$. When a disproportionately large $\tau$ is assigned, the highest-order derivative terms can dominate the lower-order state-dependent terms. This column-wise magnitude disparity inflates the condition number of $\mathbf{M}_L$ defined in (\ref{eq:design_matrix}), and the effect becomes progressively stronger as $q$ increases.
	
	Consequently, a large $\tau$ exacerbates matrix ill-conditioning. While the TSVD approach introduced in Section~\ref{sec:algorithm-description} mitigates the adverse effects of matrix ill-conditioning, feeding it highly ill-conditioned matrices forces the truncation of excessive singular components, destroying the physical information embedded within the Jacobian operators.
	
	Conversely, restricting $\tau$ to small values guarantees well-conditioned matrices but restricts the expressive capacity of the network. For complex problems, relying exclusively on such constrained spaces requires a large number of hidden nodes to achieve acceptable accuracy, leading to slow convergence and structural redundancy.
	
	To resolve the tension among convergence rate, structural exploratory capacity, and numerical matrix stability, the candidate pool is formulated as a parametrically scaled monotonically increasing sequence, defined as $\Upsilon = a \times \{1, 2, 4, 8, 16, 32, 64\}$. Here, $a > 0$ serves as a problem-dependent characteristic scaling factor. The magnitude of $a$ is specifically tailored to the governing differential equations to equilibrate the numerical magnitudes among the state-dependent Jacobian $\mathcal{J}_{\mathbf{u}}$ and the derivative-dependent Jacobians $\mathcal{J}_{D^{\boldsymbol{\gamma}}\mathbf{u}}$ for $1\leq|\boldsymbol{\gamma}|\leq q$.
	
	PI-SCM processes this scaled sequence in ascending order. The framework samples from the narrowest bounds first, escalating to the subsequent broader candidate range only when the current configuration space fails to secure a valid residual descent. This strategy effectively balances numerical stability with the structural expressivity required for complex physical dynamics.
	
	\section{Experimental Results}\label{sec:experiments}
	
	This section evaluates PI-SCM along three complementary axes. First, forward problems assess state approximation for the Van der Pol oscillator, the two-dimensional Helmholtz equation, and the Allen--Cahn equation. Second, inverse problems assess joint state reconstruction and parameter identification on the Van der Pol and Helmholtz systems. Third, an activation-function study isolates the effect of the hidden-node basis.
	
	\subsection{Experimental Systems}
	
	\subsubsection{Van der Pol oscillator}\label{subsec:vdp}
	The Van der Pol oscillator \cite{ref17} is a nonlinear, non-conservative dynamical system used in circuit modeling, seismology, biological neuron modeling, and optimal control \cite{ref18}. We consider the controlled second-order equation
	\begin{equation}\label{eq:vdp_second_order}
		u_{tt}-\mu(1-u^2)u_t+u-U=0, \qquad (t,U)\in[0,5]\times[0,1],
	\end{equation}
	where $\mu$ is the nonlinear damping coefficient and $U$ is the exogenous input. The initial conditions are $u(0,U)=-0.25$ and $u_t(0,U)=-2.0$. The value $\mu=1$ is prescribed in the forward task, whereas $\mu$ is identified in the inverse task.
	
	\subsubsection{Two-dimensional Helmholtz equation}\label{subsec:helmholtz}
	The Helmholtz equation is a canonical elliptic model of time-harmonic wave propagation. The benchmark is
	\begin{equation}\label{eq:helmholtz_original}
		u_{xx}+u_{yy}+k^2u-q(x,y)=0, \qquad (x,y)\in[-1,1]\times[-1,1],
	\end{equation}
	with homogeneous Dirichlet conditions
	\begin{equation}\label{eq:helmholtz_bc}
		u(-1,y)=u(1,y)=u(x,-1)=u(x,1)=0.
	\end{equation}
	
	A manufactured source is used:
	\begin{equation}\label{eq:helmholtz_source}
		\begin{split}
			q(x,y)={}&-(a_1\pi)^2\sin(a_1\pi x)\sin(a_2\pi y)\\
			&-(a_2\pi)^2\sin(a_1\pi x)\sin(a_2\pi y)\\
			&+\sin(a_1\pi x)\sin(a_2\pi y),
		\end{split}
	\end{equation}
	where $a_1=1$ and $a_2=4$. The value $k^2=1$ is prescribed in the forward task, whereas $k^2$ is identified in the inverse task. 
	
	The corresponding analytical solution is
	\begin{equation}\label{eq:helmholtz_solution}
		u(x,y)=\sin(a_1\pi x)\sin(a_2\pi y).
	\end{equation}
	
	\subsubsection{Allen--Cahn equation}\label{subsec:allen-cahn}
	The Allen--Cahn equation models phase separation and presents a challenging combination of a stiff reaction term and sharp moving interfaces. We consider
	\begin{equation}\label{eq:ac_original}
		u_t-\epsilon u_{xx}+5u^3-5u=0, \qquad (x,t)\in[-1,1]\times[0,0.5],
	\end{equation}
	where $\epsilon=10^{-4}$, $u(x,0)=x^2\cos(\pi x)$, and the periodic boundary constraints are $u(-1,t)=u(1,t)$ and $u_x(-1,t)=u_x(1,t)$. Because the small diffusion coefficient produces a large scale disparity between the derivative and reaction terms, we define $X=\omega_nx$ with $\omega_n=1/\sqrt{\epsilon}=100$. The transformed equation is
	\begin{equation}\label{eq:ac_second_order_scaled}
		\frac{\partial u}{\partial t}-\epsilon\omega_n^2\frac{\partial^2u}{\partial X^2}+5u^3-5u=0.
	\end{equation}
	
	Since $\epsilon\omega_n^2=1$, the coordinate stretching balances the state- and derivative-dependent Jacobian contributions in (\ref{eq:AL_expanded}) and improves the conditioning of $\mathbf{M}_L$ without changing the underlying physical solution.
	
	For each system, state accuracy is measured by the root mean square error (RMSE) over $N_{\mathrm{test}}$ test inputs:
	\begin{equation}\label{eq:rmse}
		\mathrm{RMSE}
		=
		\sqrt{\frac{1}{N_{\mathrm{test}}}
			\sum_{i=1}^{N_{\mathrm{test}}}
			\left\|
			\mathbf{u}_{\mathrm{pred}}(\boldsymbol{\zeta}_i)
			-
			\mathbf{u}_{\mathrm{true}}(\boldsymbol{\zeta}_i)
			\right\|_2^2},
	\end{equation}
	where $\boldsymbol{\zeta}_i$ denotes the $i$th test input, and $\mathbf{u}_{\mathrm{pred}}$ and $\mathbf{u}_{\mathrm{true}}$ denote the predicted and reference states, respectively. All methods are implemented in Python and evaluated on a workstation equipped with an NVIDIA RTX 4060 Ti GPU, an Intel Core i5-14600KF CPU, and 16~GB of RAM.
	
	\subsection{Forward Problems: Experimental Setup}\label{subsubsec:vdp_forward}
	
	We consider both unsupervised and semi-supervised settings. For the Van der Pol system, we sample $N_p=1000$ interior points and $N_i=100$ initial points, with an additional $N_d=100$ Runge--Kutta observations in the semi-supervised setting. The penalty weights are set to $\omega_{bc}=1$ and $\omega_{data}=1$. The constructive parameters are $\Upsilon=15\times\{1,2,4,8,16,32,64\}$, $T_{max}=50$, $L_{max}=300$, and $W=200$, with $\mathcal{A}_1=\{1.0,0.9,\ldots,0.1\}$, $\mathcal{A}_2=\{1.0,0.9,\ldots,0\}$, and $\mathcal{R}=\{1-10^{-k}\mid k=1,\ldots,7\}$.
	
	For the two-dimensional Helmholtz equation, we use $N_p=2000$ interior collocation points, $N_{bc}=1000$ boundary points, and $L_{max}=400$. The remaining PI-SCM settings follow those of the Van der Pol experiment. The baselines include shallow and deep PINNs with architectures $1\times400$ and $4\times64$, respectively, together with the physics-informed extreme learning machine (PIELM) \cite{ref_pielm}, which uses the same $1\times400$ architecture as PI-SCM.
	
	For the Allen--Cahn equation, we set $L_{max}=500$, while the other PI-SCM settings follow those of the Van der Pol experiment. The corresponding variable-scaling physics-informed neural network (VS-PINN) baselines \cite{ref_VSPINN} use shallow and deep architectures of $1\times500$ and $4\times64$, respectively.
	
	Across all forward experiments, each PINN-based baseline uses the same physics collocation, initial/boundary, and observational points as the corresponding PI-SCM experiment. All PINN-based baselines employ the hyperbolic tangent activation and are trained with Adam for 5,000 epochs, with an initial learning rate of $10^{-2}$ for the Van der Pol and Helmholtz equations and $10^{-3}$ for the Allen--Cahn equation. For the Van der Pol equation, we additionally compare with PISCN \cite{ref16}, which adopts the same constructive settings as PI-SCM and is subsequently optimized with Adam for 100 epochs using a learning rate of $5\times10^{-3}$. Since PISCN requires labeled data during construction, it is evaluated only in the semi-supervised setting. PI-SCM and PISCN use the sine activation. Each configuration is repeated 100 times.
	
	\subsection{Forward Problems: Results}
	
	All tabulated values are reported as mean $\pm$ standard deviation, and predictive accuracy is evaluated on 10,000 randomly sampled test coordinates. For each system, the table reports training time and RMSE under the unsupervised and semi-supervised paradigms, while the accompanying figure presents the semi-supervised PI-SC-III prediction, absolute error, and representative cross-sectional profiles.
	
	\subsubsection{Van der Pol oscillator}
	The corresponding results are presented in Table~\ref{tab:vdp_performance} and Fig.~\ref{fig:vdp_heatmap}, respectively.
	
	\begin{table}[!t]
		\renewcommand{\arraystretch}{1.2}
		\setlength{\tabcolsep}{3pt}
		\centering
		\scriptsize
		\caption{Forward results for the Van der Pol oscillator.}
		\label{tab:vdp_performance}
		\begin{tabular}{@{}llcc@{}}
			\toprule
			Algorithm & Paradigm & Time (s) & RMSE \\
			\midrule
			PINN ($1\times300$) & Unsupervised & $21.3\pm0.57$ & $3.69e{-1}\pm9.18e{-2}$ \\
			& Semi-supervised & $27.5\pm0.89$ & $8.05e{-2}\pm4.02e{-3}$ \\
			PINN ($4\times64$) & Unsupervised & $40.0\pm1.02$ & $5.49e{-3}\pm3.03e{-3}$ \\
			& Semi-supervised & $50.8\pm1.08$ & $1.34e{-3}\pm5.22e{-4}$ \\
			PISCN & Unsupervised & -- & -- \\
			& Semi-supervised & $13.8\pm0.37$ & $7.58e{-2}\pm2.59e{-2}$ \\
			\midrule
			PI-SC-I & Unsupervised & $0.53\pm0.02$ & $3.88e{-1}\pm1.41e{-1}$ \\
			& Semi-supervised & $0.60\pm0.01$ & $1.23e{-1}\pm3.82e{-2}$ \\
			PI-SC-II & Unsupervised & $1.37\pm0.04$ & $1.30e{-2}\pm5.73e{-3}$ \\
			& Semi-supervised & $1.57\pm0.05$ & $5.27e{-3}\pm2.63e{-3}$ \\
			PI-SC-III & Unsupervised & $1.44\pm0.04$ & $1.93e{-3}\pm9.86e{-4}$ \\
			& Semi-supervised & $1.64\pm0.04$ & $5.92e{-4}\pm1.76e{-4}$ \\
			\bottomrule
		\end{tabular}
	\end{table}
	
	\begin{figure*}[!t]
		\centering
		\includegraphics[width=\textwidth]{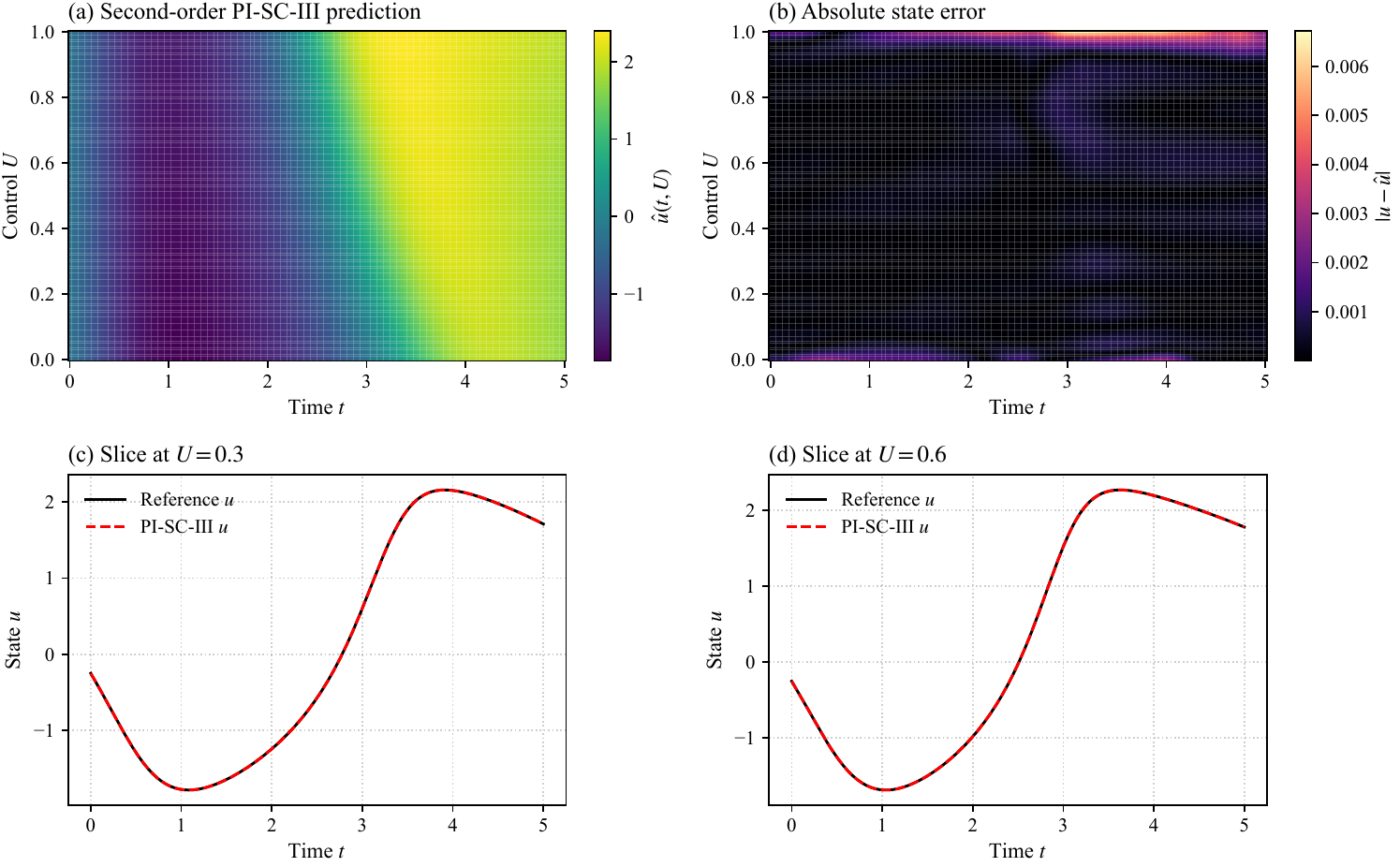}
		\caption{Forward solution of the Van der Pol oscillator obtained by semi-supervised PI-SC-III. Top left: predicted state over $(t,U)$. Top right: absolute pointwise error. Bottom: cross-sectional trajectories at $U=0.3$ and $U=0.6$.}
		\label{fig:vdp_heatmap}
	\end{figure*}
	
	\subsubsection{Two-dimensional Helmholtz equation}
	The corresponding results are presented in Table~\ref{tab:helmholtz_performance} and Fig.~\ref{fig:helmholtz_heatmap}, respectively.
	
	\begin{table}[!t]
		\renewcommand{\arraystretch}{1.2}
		\setlength{\tabcolsep}{3pt}
		\centering
		\scriptsize
		\caption{Forward results for the two-dimensional Helmholtz equation.}
		\label{tab:helmholtz_performance}
		\begin{tabular}{@{}llcc@{}}
			\toprule
			Algorithm & Paradigm & Time (s) & RMSE \\
			\midrule
			PINN ($1\times400$) & Unsupervised & $40.5\pm1.23$ & $2.62e{-2}\pm1.97e{-2}$ \\
			& Semi-supervised & $53.2\pm1.51$ & $1.13e{-2}\pm4.50e{-3}$ \\
			PINN ($4\times64$) & Unsupervised & $75.6\pm1.11$ & $3.28e{-2}\pm1.43e{-2}$ \\
			& Semi-supervised & $105\pm1.38$ & $1.44e{-2}\pm5.06e{-3}$ \\
			PIELM & Unsupervised & $0.02\pm0.00$ & $9.09e{-3}\pm3.83e{-3}$ \\
			& Semi-supervised & $0.02\pm0.00$ & $6.61e{-3}\pm3.84e{-3}$ \\
			\midrule
			PI-SC-I & Unsupervised & $0.57\pm0.01$ & $1.91e{-1}\pm9.82e{-2}$ \\
			& Semi-supervised & $0.66\pm0.01$ & $1.57e{-1}\pm7.25e{-2}$ \\
			PI-SC-II & Unsupervised & $1.99\pm0.05$ & $5.19e{-3}\pm2.05e{-3}$ \\
			& Semi-supervised & $2.20\pm0.06$ & $4.71e{-3}\pm1.78e{-3}$ \\
			PI-SC-III & Unsupervised & $2.31\pm0.04$ & $1.02e{-6}\pm4.79e{-7}$ \\
			& Semi-supervised & $2.54\pm0.05$ & $1.04e{-6}\pm8.75e{-7}$ \\
			\bottomrule
		\end{tabular}
	\end{table}

	\begin{figure*}[!t]
		\centering
		\includegraphics[width=\textwidth]{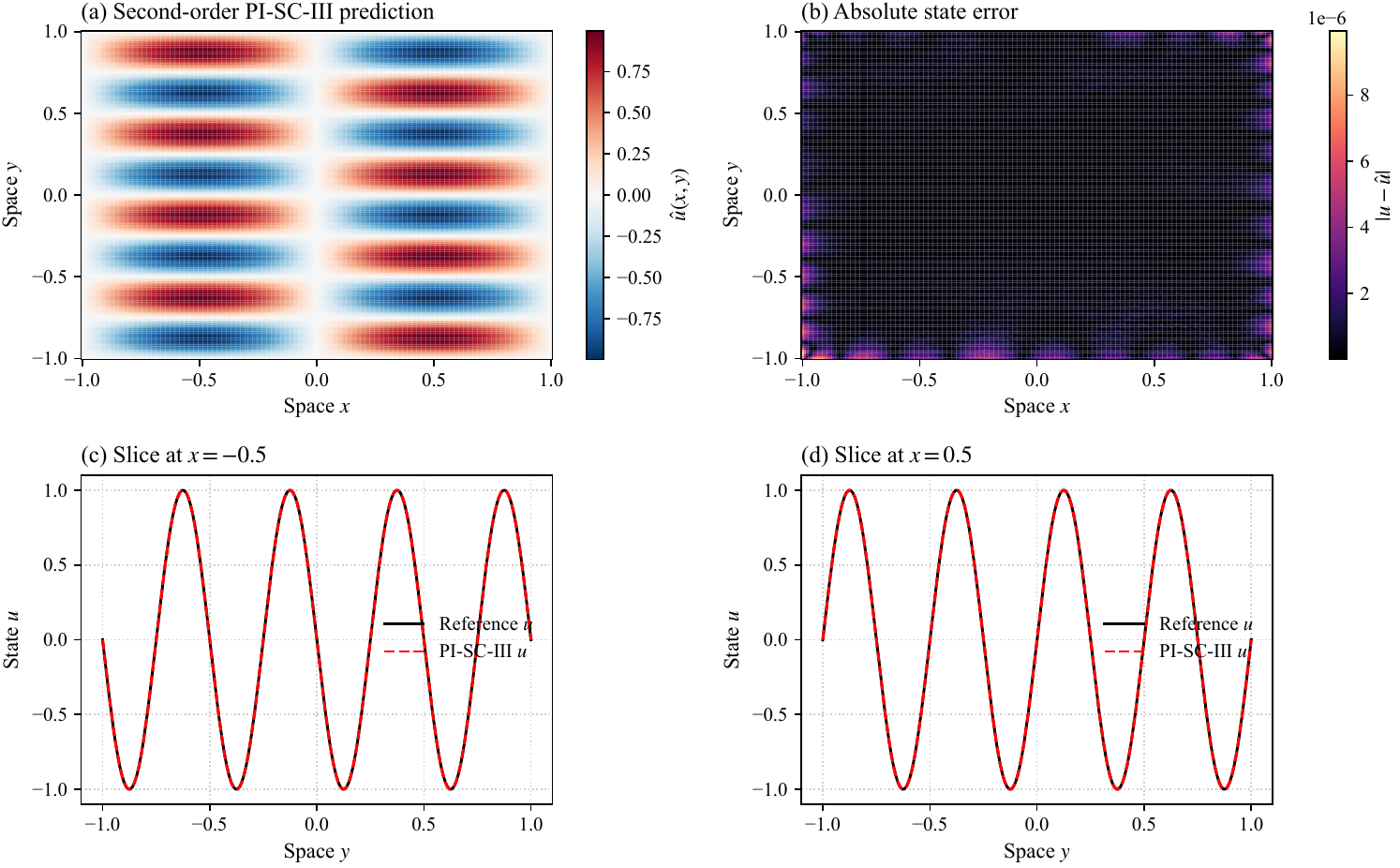}
		\caption{Forward solution of the two-dimensional Helmholtz equation obtained by PI-SC-III. Top left: predicted state over $(x,y)$. Top right: absolute pointwise error. Bottom: cross-sectional profiles at $x=-0.5$ and $x=0.5$.}
		\label{fig:helmholtz_heatmap}
	\end{figure*}
	
	\subsubsection{Allen--Cahn equation}
	The corresponding results are presented in Table~\ref{tab:ac_performance} and Fig.~\ref{fig:pde_heatmap}, respectively.
	
	\begin{table}[!t]
		\renewcommand{\arraystretch}{1.2}
		\setlength{\tabcolsep}{3pt}
		\centering
		\scriptsize
		\caption{Forward results for the Allen--Cahn equation.}
		\label{tab:ac_performance}
		\begin{tabular}{@{}llcc@{}}
			\toprule
			Algorithm & Paradigm & Time (s) & RMSE \\
			\midrule
			VS-PINN ($1\times500$) & Unsupervised & $31.8\pm0.79$ & $5.40e{-1}\pm2.63e{-2}$ \\
			& Semi-supervised & $36.2\pm0.94$ & $4.65e{-1}\pm4.67e{-2}$ \\
			VS-PINN ($4\times64$) & Unsupervised & $65.0\pm1.06$ & $5.23e{-1}\pm4.14e{-2}$ \\
			& Semi-supervised & $71.6\pm1.35$ & $7.75e{-2}\pm1.06e{-1}$ \\
			\midrule
			PI-SC-I & Unsupervised & $1.59\pm0.04$ & $5.28e{-1}\pm3.50e{-3}$ \\
			& Semi-supervised & $1.73\pm0.03$ & $3.16e{-1}\pm2.47e{-2}$ \\
			PI-SC-II & Unsupervised & $3.13\pm0.07$ & $1.71e{-1}\pm4.44e{-2}$ \\
			& Semi-supervised & $3.41\pm0.06$ & $2.25e{-2}\pm1.04e{-2}$ \\
			PI-SC-III & Unsupervised & $3.49\pm0.14$ & $6.16e{-3}\pm2.07e{-3}$ \\
			& Semi-supervised & $3.70\pm0.07$ & $1.57e{-3}\pm2.88e{-4}$ \\
			\bottomrule
		\end{tabular}
	\end{table}
	
	\begin{figure*}[!t]
		\centering
		\includegraphics[width=\textwidth]{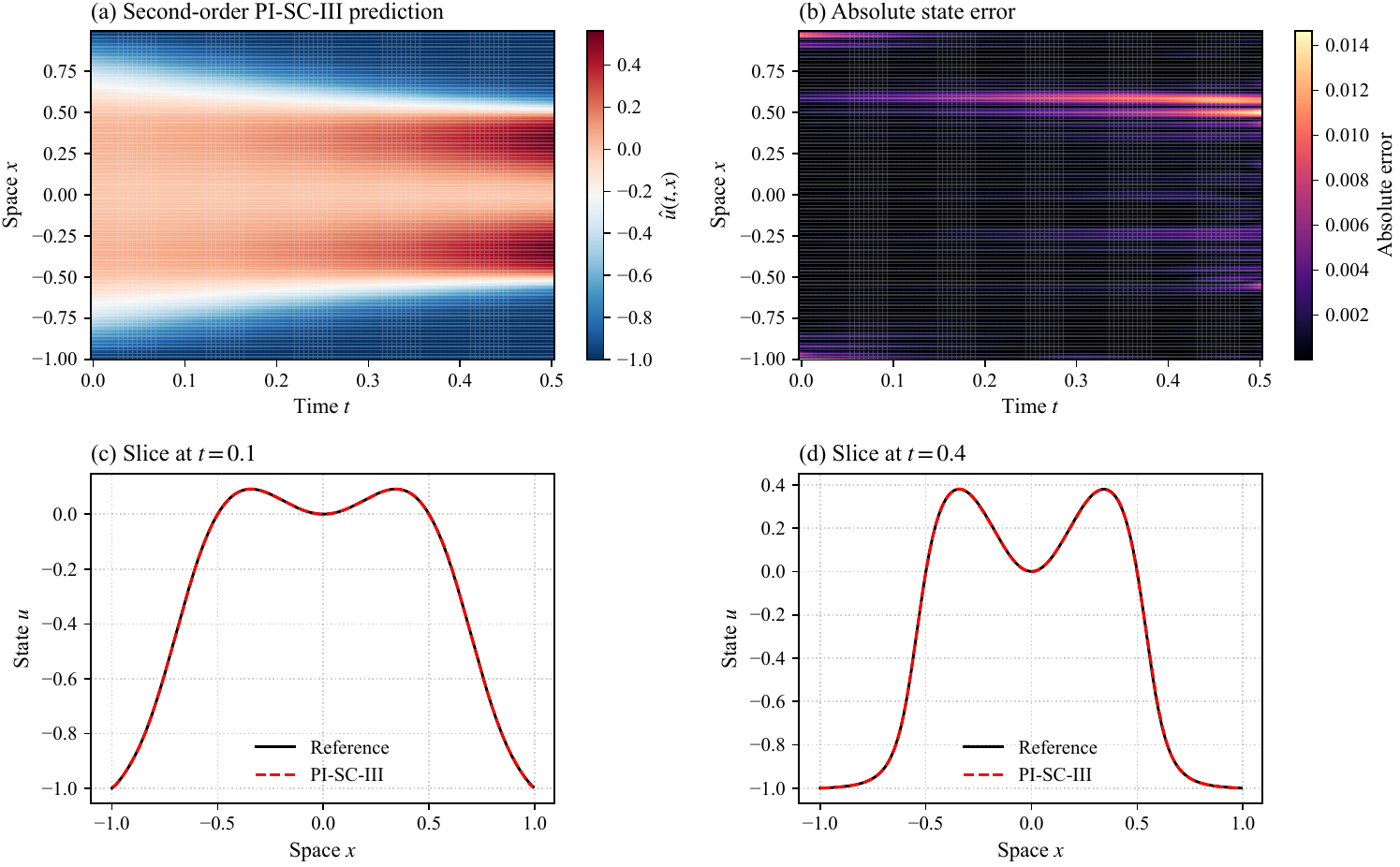}
		\caption{Forward solution of the Allen--Cahn equation obtained by semi-supervised PI-SC-III. Top left: predicted state over $(t,x)$. Top right: absolute pointwise error. Bottom: spatial profiles at $t=0.1$ and $t=0.4$.}
		\label{fig:pde_heatmap}
	\end{figure*}
	
	\subsection{Forward Problems: Result Analysis}
	
	The forward results in Tables~\ref{tab:vdp_performance}--\ref{tab:ac_performance} reveal a consistent accuracy--efficiency hierarchy across the three forward benchmarks. PI-SC-I provides the lowest construction cost but may underfit problems involving strong oscillations, high spatial frequencies, or sharp transitions. Extending the correction scope in PI-SC-II substantially improves predictive accuracy, while the global correction in PI-SC-III consistently achieves the lowest errors among the three variants. Importantly, this gain preserves the main computational advantage of the framework, with PI-SC-III remaining substantially faster than the deep gradient-based baselines while achieving comparable or better accuracy.
	
	This trend is consistent across both ODE and PDE problems. For the unsupervised Van der Pol problem, PI-SC-III reduces the RMSE from $5.49\times10^{-3}$ for the deep PINN to $1.93\times10^{-3}$, while reducing the training time from $40.0$~s to $1.44$~s. For the Helmholtz equation, it achieves an RMSE on the order of $10^{-6}$ under both training paradigms. On the semi-supervised Allen--Cahn problem, PI-SC-III reaches an RMSE of $1.57\times10^{-3}$, reducing the error by approximately a factor of $49$ while being $19\times$ faster than the corresponding deep VS-PINN. The particularly high accuracy observed for the Helmholtz problem can be attributed to its linear operator structure. Because the governing equation is linear in both $u$ and its derivatives $u_{xx}$ and $u_{yy}$, the state-space linearization used in PI-SCM is exact for this operator and introduces no nonlinear truncation error. This enables the global least-squares correction in PI-SC-III to recover the solution with very high precision. For the same reason, PIELM is considered only for the Helmholtz benchmark, since it is restricted to differential equations that are linear in the state and its derivatives. Although PIELM is faster for this problem, its RMSE remains more than three orders of magnitude higher than that of PI-SC-III.
	
	Sparse observations provide additional benefits when the physics constraints alone pose a challenging approximation problem, as demonstrated by the Van der Pol and Allen--Cahn results. Their contribution is limited for the Helmholtz problem because the unsupervised model already achieves a very low error level. Overall, the forward experiments demonstrate that PI-SCM provides the most favorable balance among predictive accuracy, computational efficiency, and the ability to incorporate sparse observations.
	
	\subsection{Inverse Problems: Experimental Setup}
	
	The inverse experiments jointly recover the state and one scalar physical coefficient. Each system compares the two PINN architectures with PI-SC-I, PI-SC-II, and PI-SC-III.
	
	For both inverse benchmarks, the unknown physical parameter has a ground-truth value of $1.0$ and is initialized at $0.1$: $\mu_{true}=1.0$ and $\mu^{(0)}=0.1$ for Van der Pol, and $k^2_{true}=1.0$ and $\left(k^2\right)^{(0)}=0.1$ for Helmholtz. Each experiment uses $N_d=100$ observations, while all other sampling, construction, and baseline settings match the corresponding forward experiment. Each method is evaluated over 100 independent trials
	
	\subsection{Inverse Problems: Results}

	All tabulated values are reported as mean $\pm$ standard deviation, and state predictive accuracy is evaluated on 10,000 randomly sampled test coordinates. For each system, the table reports parameter identification error, state RMSE, and training time.
	
	\subsubsection{Van der Pol oscillator}
	The corresponding results are presented in Table~\ref{tab:vdp_inverse}.
	
	\begin{table}[!t]
		\renewcommand{\arraystretch}{1.2}
		\setlength{\tabcolsep}{3pt}
		\centering
		\scriptsize
		\caption{Inverse results for the Van der Pol oscillator.}
		\label{tab:vdp_inverse}
		\begin{tabular}{@{}lccc@{}}
			\toprule
			Algorithm & $|\mu_{pred}-\mu_{true}|$ & Time (s) & State RMSE \\
			\midrule
			PINN ($1\times300$) & $2.88e{-1}\pm1.09e{-1}$ & $22.0\pm0.51$ & $5.85e{-2}\pm1.35e{-2}$ \\
			PINN ($4\times64$) & $2.35e{-4}\pm2.01e{-4}$ & $51.8\pm0.94$ & $9.51e{-4}\pm3.42e{-4}$ \\
			\midrule
			PI-SC-I & $4.01e{-1}\pm4.69e{-2}$ & $0.71\pm0.02$ & $1.08e{-1}\pm2.05e{-2}$ \\
			PI-SC-II & $1.91e{-3}\pm1.59e{-3}$ & $1.64\pm0.03$ & $4.80e{-3}\pm2.10e{-3}$ \\
			PI-SC-III & $5.08e{-4}\pm2.75e{-4}$ & $1.95\pm0.04$ & $5.96e{-4}\pm2.00e{-4}$ \\
			\bottomrule
		\end{tabular}
	\end{table}
	
	\subsubsection{Two-dimensional Helmholtz equation}
	The corresponding results are presented in Table~\ref{tab:helmholtz_inverse_performance}.
	
	\begin{table}[!t]
		\renewcommand{\arraystretch}{1.15}
		\setlength{\tabcolsep}{3pt}
		\centering
		\scriptsize
		\caption{Inverse results for the two-dimensional Helmholtz equation.}
		\label{tab:helmholtz_inverse_performance}
		\begin{tabular}{@{}lccc@{}}
			\toprule
			Algorithm & $|k^2_{pred}-k^2_{true}|$ & Time (s) & State RMSE \\
			\midrule
			PINN ($1\times400$) & $1.09e{+1}\pm2.87e{+0}$ & $50.6\pm1.19$ & $2.82e{-1}\pm1.27e{-1}$ \\
			PINN ($4\times64$) & $6.72e{-1}\pm6.27e{-1}$ & $101\pm2.13$ & $2.36e{-2}\pm1.22e{-2}$ \\
			\midrule
			PI-SC-I & $3.72e{+1}\pm1.93e{+1}$ & $1.49\pm0.03$ & $2.17e{-1}\pm9.90e{-2}$ \\
			PI-SC-II & $7.14e{-1}\pm7.55e{-1}$ & $3.14\pm0.07$ & $5.04e{-3}\pm2.21e{-3}$ \\
			PI-SC-III & $8.42e{-4}\pm1.60e{-3}$ & $3.51\pm0.10$ & $6.70e{-5}\pm1.15e{-4}$ \\
			\bottomrule
		\end{tabular}
	\end{table}
	
	\subsection{Inverse Problems: Result Analysis}
	
	The inverse results in Tables~\ref{tab:vdp_inverse} and~\ref{tab:helmholtz_inverse_performance} show the same update-scope hierarchy as in the forward experiments. PI-SC-I is less effective for joint state and parameter recovery, while PI-SC-II improves the reconstruction accuracy through windowed correction. PI-SC-III provides the most accurate overall recovery.
	
	Across both inverse benchmarks, PI-SC-III provides parameter-identification accuracy comparable to or better than that of the deep PINN. It also improves state reconstruction and requires substantially less training time. Overall, these results demonstrate that PI-SCM retains high predictive accuracy and computational efficiency in inverse problems.
	
	\subsection{Activation-Function Study: Experimental Setup}
	
	We isolate the influence of the hidden-node basis on the semi-supervised Van der Pol forward problem. PI-SC-III is evaluated with sine, hyperbolic tangent, and sigmoid activations. $L_{max}$ is set to 1,000, while the sampling points, penalty weights, candidate ranges, and remaining constructive settings are kept identical to the semi-supervised Van der Pol configuration described above. Each activation is evaluated over 100 independent trials.
	
	\subsection{Activation-Function Study: Results}
	
	Table~\ref{tab:activation_performance} summarizes the state RMSE , while Fig.~\ref{fig:activation_residual} shows the evolution of the average true residual $\mathcal{E}_L^2$ with the number of accepted hidden nodes.
	
	\begin{table}[!t]
		\renewcommand{\arraystretch}{1.15}
		\centering
		\footnotesize
		\caption{Semi-supervised PI-SC-III results with different activation functions.}
		\label{tab:activation_performance}
		\begin{tabular}{@{}lc@{}}
			\toprule
			Activation & RMSE \\
			\midrule
			Sine & $5.66e{-4}\pm3.59e{-4}$ \\
			Hyperbolic tangent & $2.71e{-3}\pm8.92e{-4}$ \\
			Sigmoid & $1.68e{-2}\pm7.96e{-3}$ \\
			\bottomrule
		\end{tabular}
	\end{table}
	
	\begin{figure}[H]
		\centering
		\includegraphics[width=\linewidth]{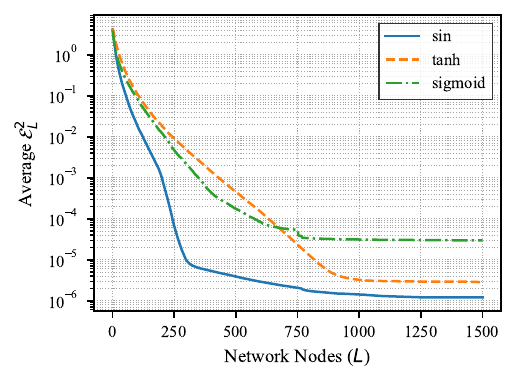}
		\caption{Evolution of the average true residual $\mathcal{E}_L^2$ with the number of accepted hidden nodes for three activation functions.}
		\label{fig:activation_residual}
	\end{figure}
	
	\subsection{Activation-Function Study: Result Analysis}
	
	Sine activation achieves the highest predictive accuracy and the fastest residual convergence among the tested activation functions. As shown in Fig.~\ref{fig:activation_residual}, it reduces the nonlinear residual more rapidly and converges to the lowest residual level, supporting its use as the default activation in the reported PI-SCM experiments.
	
	\section{Conclusion}\label{sec:conclusion}
	
	This paper introduced PI-SCM, a backpropagation-free framework for nonlinear differential equations. Its progressive algorithmic suite, ranging from localized construction (PI-SC-I) through windowed correction (PI-SC-II) to global correction (PI-SC-III), converts nonlinear physics-informed learning into a sequence of analytically solvable least-squares problems. The universal approximation analysis establishes the theoretical basis for residual convergence, while the inverse formulation enables state reconstruction and physical-parameter identification within the same framework.
	
	The empirical study considered forward approximation on the Van der Pol, two-dimensional Helmholtz, and stiff Allen--Cahn systems; inverse identification on the Van der Pol and Helmholtz systems; and an activation-function ablation. Across these tasks, PI-SC-III consistently provided the strongest accuracy--efficiency trade-off among the proposed variants. It achieved competitive or lower errors than deep gradient-based PINNs while reducing training time by one to two orders of magnitude. The semi-supervised results also showed that PI-SCM can assimilate sparse observations when the physical constraints alone are insufficient, and the activation study identified sine as the most effective basis for the oscillatory benchmark.
	
	Future work should examine scalability to higher-dimensional geometries, more severe multiscale stiffness, noisy parameter-identification settings, and adaptive selection of the candidate range and update scope. Extending the current shallow constructive architecture while retaining analytical training and numerical stability is another important direction toward real-time scientific machine-learning applications.

\end{document}